\newtheorem{thm}{Theorem}[section]
\newtheorem*{thm*}{Theorem}
\newtheorem{lem}[thm]{Lemma}
\newtheorem*{lem*}{Lemma}
\newtheorem{cor}[thm]{Corollary}
\newtheorem*{cor*}{Corollary}
\newtheorem*{prop*}{Proposition}
\theoremstyle{definition} 
\newtheorem{defn}[thm]{Definition}
\newtheorem*{defn*}{Definition}
\theoremstyle{remark}
\newtheorem{rem}[thm]{Remark}
\newtheorem*{rem*}{Remark}
\newtheorem{example}[thm]{Example}
\newtheorem*{example*}{Example}
\newtheorem*{que*}{Question}
\newcommand{\N}{\mathbb N}
\newcommand{\C}{\mathbb C}
\newcommand{\set}[1]{\left\{#1\right\}}
\newcommand{\eps}{\varepsilon}
\newcommand{\ind}{\mathbbm{1}}
\renewcommand{\tilde}{\widetilde}
\newcommand{\pnorm}[2]{{\left\lVert{#2}\right\rVert}_{#1}}
\DeclareMathOperator{\cllin}{\overline{lin}}
\newcommand{\A}{\mathscr{A}}
\newcommand{\F}{\mathcal{F}}
\newcommand{\G}{{\mathcal G}}
\newcommand{\X}{X\times[\,0,1]}
\newcommand{\dist}{\mathrm{dist}}
\newcommand{\DS}{\mathcal{DS}}
\renewcommand{\epsilon}{\varepsilon}
\renewcommand{\leq}{\leqslant}
\renewcommand{\geq}{\geqslant}
\newcommand{\symdiff}{\triangle}
\title{Doubly stochastic operators with zero entropy}
\author{Bartosz Frej}
\author{Dawid Huczek}
\address{Faculty of Pure and Applied Mathematics,
Wroc{\l}aw University of Science and Technology, 
Wybrze\.{z}e Wyspia\'{n}skiego 27,
50-370 Wroc{\l}aw, Poland.}
\email{Bartosz.Frej@pwr.edu.pl; Dawid.Huczek@pwr.edu.pl}
\subjclass[2010]{Primary 37A30; Secondary 28D20, 47A35.}
\keywords{Markov operator, entropy, discrete spectrum, Kushnirenko's theorem, Halmos--von Neumann's theorem.}
\thanks{Research of both authors is supported from resources for science in years 2013-2018 as research project (NCN grant 2013/08/A/ST1/00275, Poland)}
\begin{document}
\begin{abstract}
We study doubly stochastic operators with zero entropy. We generalize three famous theorems: the Rokhlin's theorem on genericity of zero entropy, the Kushnirenko's theorem on equivalence of discrete spectrum and nullity and the Halmos-von Neumann's theorem on representation of  maps with discrete spectrum as group rotations.
\end{abstract}

\maketitle


\section{Introduction}
Let $\mu$ be a probability measure on a measurable space $(X,\Sigma)$.
By a \emph{doubly stochastic operator} (also called \emph{bistochastic} or \emph{Markov operator}) we mean a linear operator $T$ on the space $L^1(\mu)$ of integrable functions, which fulfills the following conditions:
\begin{itemize}
	\item[(i)] $Tf$ is positive for every positive $f \in L^1(\mu)$,
	\item[(ii)] $T\ind=\ind$ (where $\ind(x)=1$ for all $x\in X$),
	\item[(iii)] $\int Tf\,d\mu=\int f\,d\mu$ for every $f \in L^1(\mu)$.
\end{itemize}
For each $p>1$ the space $L^p(\mu)\subset L^1(\mu)$ is invariant under the action of a doubly stochastic operator $T$. Conversely, if $T:L^p(\mu)\to L^p(\mu)$ is doubly stochastic, then it can be uniquely extended to an operator on $L^q(\mu)$ for any $1\leq q <p$, with no harm to the above properties. Therefore, we can study doubly stochastic operators on space $L^p(\mu)$ for any $p\geq 1$, in particular on the space $L^2(\mu)$ of square integrable real functions. Note that such operators preserve conjugacy, hence transform real functions into real functions and, conversely, the action of $T$ is determined by its action on the space of real functions. The set of all doubly stochastic operators on $L^1(\mu)$ will be denoted by $\DS(\mu)$.

A special role is played by operators which are induced by \emph{transition probabilities (or probability kernels)}, i.e., operators defined by the formula
\[
Tf(x) = \int f(y)\,P(x,dy),
\]
where $P:X\times \Sigma \to [0,1]$ is a transition probability.
It is known that on a standard probability space every doubly stochastic operator is of this form.

In the current paper we succeed in generalizing three famous theorems on dynamical systems with zero entropy to doubly stochastic operators with zero entropy. The first one is the Rokhlin's theorem on genericity of systems with entropy zero in the weak topology on the set of all measure preserving maps on $(X,\Sigma,\mu)$. In \cite{V} A.Vershik asked if the same holds for entropy of Markov operators, whatever the definition of entropy can be in this case. We prove that, in the strong operator topology, the set of doubly stochastic operators with zero entropy is residual in the set of all doubly stochastic operators on $L^1(\mu)$. Recall that the \emph{strong operator topology} is just the topology of pointwise convergence, i.e., $T_n$ converges to $T$ in the strong operator topology if $\lVert T_nf-Tf \rVert$ converges to zero for each $f\in L^1(\mu)$.  It is known that in pointwise case, the weak topology on the space of transformations coincides with the strong operator topology on the set of their Koopman operators. Our result (thm. \ref{new_Rokhlin}) is thus a true analog of Rokhlin's result.

The second aim is to give an analog of the Kushnirenko's theorem, which characterizes null transformations, i.e., maps with zero sequence entropy along all sequences, as maps with discrete spectrum. In operator case this equivalence is no longer valid, because null operators need not have linearly dense set of eigenfunctions. Nevertheless, we present an appropriate modification, using the Jacobs-de Leeuw-Glicksberg decomposition associated to an operator (thm. \ref{thm:main}). The description of null operators naturally leads us to generalization of the Halmos-von Neumann theorem (thm. \ref{new_HvN}), which in this formulation becomes the characterization of nullity (not the discrete spectrum).

Defining the entropy we will follow the ideas which first appeared in~\cite{DF}. To be more specific, we put the finite collections of functions in the role that finite partitions played in the classic definition of entropy. We define their join and the entropy of a single collection, and then we calculate the entropy of $T$ with respect to a collection as an appropriate limit and, finally, take supremum over all such collections. The details are given in the forthcoming section.

\section{Definition of entropy}
By a \emph{collection} of functions we mean a finite sequence of measurable functions with range in unit interval. 
For a function $f \colon X\to [0,1]$ let 
\[
A_f = \set{(x,t)\in\X : t\le f(x)}
\]
and let $\A_f$ be the partition of $X\times [0,1]$ consisting of $A_f$ and its complement. 
For a collection $\F$ we define $\A_\F = \bigvee_{f\in\F} \A_f.$ If $\F\vee\G$ denotes a concatenation of collections $\F$ and $\G$ then, clearly, $\A_{\F\vee\G}=\A_\F \vee \A_\G$.
To shorten the notation we write $T^n\F=\set{T^nf \colon f\in\F}$ and  $\F^n_T$ for the concatenation of $\F$, $T\F$,... $T^{n-1}\F$.

For two collections of measurable functions $\mathcal F=\{f_1,...,f_r\}$ and 
$\mathcal G=\{g_1,...,g_{r'}\}$, $r'\leq r$, we define their $L^1$-distance $\dist(\mathcal F, \mathcal G) $ by a formula
\[
\dist(\mathcal F, \mathcal G)= 
\min_\pi\left\{\max_{1\leq i\leq r}\int |f_i-g_{\pi(i)}|\ d\mu\right\},
\]
where the mi\-ni\-mum ranges over all per\-mu\-ta\-tions $\pi$ 
of a set $\{1,2,\dots r\}$
and where~$\mathcal G$ is considered an $r$-element collection by setting 
$g_i \equiv 0$ for $r'<i\leq r$.

Let $\lambda$ mean the Lebesgue measure on the unit interval. 
We define
\begin{eqnarray*}
&& 	H(\F) = H_{\mu\times\lambda} (\A_\F) = -\sum_{A\in\A_\F} 
	(\mu\times\lambda)(A)\cdot \log(\mu\times\lambda)(A),\\
&&	h(T,\F) =  \lim_{n\to\infty} \frac1n H(\F^n_T),\\
&&	h(T) = \sup_{\F} h(T,\F)
\end{eqnarray*}
with the supremum ranging over all finite collections of measurable functions from $X$ to $[0,1]$ (existence of the limit was proved in \cite{DF}). For the conditional entropy of a collection $\F$ with respect to the collection $\G$ we put
\[
H(\F|\G)=H(\F\vee\G)-H(\G)=H_{\mu\times\lambda}(\A_\F|\A_\G).
\]
The following continuity assertion is true: for every $r\ge 1$ 
and $ \eps>0$ there is a $\delta>0$ such that if $\mathcal F$ and $\mathcal G$ have cardinalities at most $r$ and $\dist(\mathcal F, \mathcal G)<\delta$ 
then $|H(\F|\G)|< \eps$.

It is known that the above procedure leads to a common value of entropy $h(T)$ of a doubly stochastic operator for many reasonable choices of $H(\F)$ (see \cite{D} or \cite{DF} for details on axiomatic theory of metric entropy of doubly stochastic operators). We find the above version convenient for our purposes. Moreover, we see that since the definition depends only on bounded functions, in fact only on functions $X\to[0,1]$, the choice of the domain $L^p(\mu)$ of an operator does not affect the value of entropy.

The \emph{sequence entropy} of a measure preserving map $T$ with respect to a partition $\xi$ along a subsequence  $A=(i_n)_{n\in\N}$ of $\{0,1,2...\}$ is defined by
\[
h_A(T,\xi)=\limsup_{n\to\infty} \frac1n H(\bigvee_{k=1}^n T^{-i_k}\xi),
\]
and the \emph{sequence entropy} of $T$ is
\[
h_A(T)=\sup_{\xi} h_A(T,\xi),
\]
where the supremum ranges over all finite partitions of $X$.
Similarly, in operator case we formulate the following definition.
\begin{defn}
The \emph{sequence entropy} of a doubly stochastic operator~$T$ with respect to a collection of functions~$\F$ along a sequence  $A=(i_n)_{n\in\N}$ is defined as 
\[
h_A(T,\F)=\limsup_{n\to\infty} \frac1n H(\bigvee_{k=1}^n T^{i_k}\F),
\]
and the \emph{sequence entropy} of the operator~$T$ along a sequence  $A$ is given by
\[
h_A(T)=\sup_{\F} h_A(T,\F).
\]
\end{defn}
By a brief inspection of arguments in \cite{DF} we check that the value of $h_A(T)$ does not depend on the choice of the static entropy $H(\F)$ as long as the latter fulfils requirements of the axiomatic definition of entropy.


\section{Genericity of entropy zero}

\begin{lem}
Let $T$ be a doubly stochastic operator on $L^p(\mu)$. Let $Sf=\int fd\mu$. Then $(1-\frac1n)T+\frac1n S$ converges to $T$ in operator norm, hence also in strong operator topology.
\end{lem}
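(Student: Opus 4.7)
The plan is to reduce the statement to a one-line norm estimate. The key observation is the algebraic identity
\[
\bigl[(1-\tfrac{1}{n})T + \tfrac{1}{n}S\bigr] - T = \tfrac{1}{n}(S - T),
\]
so that
\[
\bigl\lVert (1-\tfrac{1}{n})T + \tfrac{1}{n}S - T \bigr\rVert = \tfrac{1}{n}\,\lVert S - T \rVert \leq \tfrac{1}{n}\bigl(\lVert S \rVert + \lVert T \rVert\bigr).
\]
Everything then comes down to showing that both $T$ and $S$ are contractions on $L^p(\mu)$.

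For $T$ this is a standard consequence of properties (i)--(iii): positivity together with $T\ind=\ind$ implies a pointwise Jensen inequality $|Tf|^p \leq T(|f|^p)$, and then integrating and using $\int Tg\,d\mu = \int g\,d\mu$ gives $\lVert Tf\rVert_p \leq \lVert f\rVert_p$. For $S$, since $\mu$ is a probability measure, Jensen's inequality yields
\[
\lVert Sf\rVert_p = \Bigl|\int f\,d\mu\Bigr| \leq \int|f|\,d\mu \leq \lVert f\rVert_p,
\]
so $\lVert S\rVert \leq 1$ as well. (In fact $S$ is also doubly stochastic, though this is not needed here.)

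Combining these bounds gives the operator-norm estimate $\frac{2}{n}$, which tends to $0$. The final assertion — strong operator convergence — is automatic, since convergence in operator norm trivially implies pointwise convergence on every $f \in L^p(\mu)$. There is no genuine obstacle in this argument; the only point requiring a moment's care is verifying that $S$, which produces constant functions, really is a contraction on $L^p$, and this is immediate from $\mu$ being a probability measure.
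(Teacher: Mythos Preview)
Your proof is correct and follows essentially the same route as the paper: both use the identity $(1-\tfrac1n)T+\tfrac1n S - T = \tfrac1n(S-T)$ and bound the norm by $\tfrac2n$ via the triangle inequality and the fact that $T$ and $S$ are contractions. You supply slightly more justification (the Jensen arguments for $\lVert T\rVert,\lVert S\rVert\leq 1$), whereas the paper simply writes the pointwise estimate $\pnorm{p}{Tf}+\pnorm{p}{Sf}\leq 2\pnorm{p}{f}$ and then takes the supremum.
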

\begin{proof}
\[
\begin{split}
\pnorm{p}{\left((1-\frac1n)T+\frac1n S\right)f-Tf} = \frac1n\pnorm{p}{Tf-Sf} \\
\leq \frac1n \left(\pnorm{p}{Tf}+\pnorm{p}{Sf}\right) \leq \frac2n \pnorm{p}{f}
\end{split}
\]
This ends the proof for strong operator topology. For norm topology, it is enough to take supremum over $f\in L^p(\mu)$ with $\pnorm{p}{f}=1$.
\end{proof}

\begin{cor}\label{dense}
The set of doubly stochastic operators with zero entropy is dense in the strong operator topology and in the norm operator topology.
\end{cor}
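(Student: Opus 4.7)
The plan is to combine the preceding Lemma with the observation that mixing any $T\in\DS(\mu)$ with the trivial projection $Sf=\int f\,d\mu\cdot\ind$ collapses the dynamics onto a fixed constant, forcing zero entropy. Since the Lemma ensures that the operators $T_m:=(1-\tfrac1m)T+\tfrac1m S$ converge to $T$ in both topologies under consideration, it suffices to show that each $T_m$ has zero entropy.

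The key algebraic fact is that $S$ is absorbing: $TS=S$ because $Sf$ is a constant and $T\ind=\ind$, $ST=S$ by property (iii) of doubly stochasticity, and $S^2=S$ directly. Writing $\alpha=1-\tfrac1m$ and expanding, a short induction then gives
\[
T_m^k = \alpha^k T^k + (1-\alpha^k)\,S.
\]
Since $\alpha<1$ and $T$ is an $L^1$-contraction, this yields $\pnorm{1}{T_m^k f - Sf}\leq 2\alpha^k\pnorm{1}{f}\to 0$ as $k\to\infty$, for every $f\in L^1(\mu)$.

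Now fix a finite collection $\F=\{f_1,\ldots,f_r\}$ with values in $[0,1]$, and set $\G=\{Sf_1,\ldots,Sf_r\}$. The previous convergence gives $\dist(T_m^k\F,\G)\to 0$, and hence in particular $\dist(T_m^k\F,T_m^{k-1}\F)\to 0$. Given $\eps>0$, the continuity of conditional entropy recalled in the preceding section (for collections of cardinality at most $r$) then provides $K$ such that $H(T_m^k\F\mid T_m^{k-1}\F)<\eps$ for all $k\geq K$. Combining the chain rule with the fact that enlarging the conditioning family only reduces entropy, and using the crude bound $H(\mathcal K)\leq r\log 2$ valid for any $r$-element collection $\mathcal K$, we obtain
\[
H(\F^n_{T_m}) \leq H(\F) + \sum_{k=1}^{n-1} H(T_m^k\F\mid T_m^{k-1}\F) \leq Kr\log 2 + (n-K)\eps.
\]
Dividing by $n$ and letting $n\to\infty$ yields $h(T_m,\F)\leq\eps$, and since $\eps$ and $\F$ are arbitrary, $h(T_m)=0$.

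The main technical point is the iteration identity $T_m^k=\alpha^k T^k+(1-\alpha^k)S$, which is what makes $T_m$ behave asymptotically like the rank-one operator $S$; once this is in hand, the rest is bookkeeping with the continuity-of-conditional-entropy statement together with a Ces\`aro-type averaging.
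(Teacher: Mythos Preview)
Your proof is correct and follows the same approach as the paper: both rely on the fact that mixing $T$ with the rank-one projection $S$ forces all iterates to converge to a constant, whence zero entropy. The paper's proof is a single sentence asserting that $((1-\alpha)T+\alpha S)^n f$ converges to a constant and declaring zero entropy; you expand both steps rigorously via the absorption identity $T_m^k=\alpha^kT^k+(1-\alpha^k)S$ and the continuity-of-conditional-entropy chain-rule estimate, which is exactly what the one-liner leaves implicit.
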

\begin{proof}
For every $\alpha\in(0,1)$ the operator $(1-\alpha)T+\alpha S$ has zero entropy, because $\big((1-\alpha)T+\alpha S\big)^n f$ converges to a constant function for every~$f$.
\end{proof}

\begin{lem}\label{Gdelta}
The set of doubly stochastic operators with zero entropy is a $G_\delta$ set in the strong operator topology in $L^1(\mu)$.
\end{lem}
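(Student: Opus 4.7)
The plan is to write $\set{T\in\DS(\mu) : h(T)=0}$ as a countable intersection of $G_\delta$ sets, one for each collection drawn from a countable family dense enough to detect zero entropy. The main work is to establish continuity of $T\mapsto \tfrac1n H(\F^n_T)$, convert this to a $G_\delta$ description per fixed $\F$, and then reduce to countably many $\F$ by an approximation estimate.

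First, I would show that for each fixed collection $\F=\set{f_1,\dots,f_r}$ and each $n$, the map $T\mapsto \tfrac1n H(\F^n_T)$ is continuous in the strong operator topology on $\DS(\mu)$. If $T_k\to T$ strongly in $L^1(\mu)$, a short induction on $i$ using the contraction $\lVert T_k\rVert_{L^1\to L^1}\leq 1$ gives $T_k^i f_j\to T^i f_j$ in $L^1$ for every $i<n$ and $j\leq r$, so $\dist(\F^n_{T_k},\F^n_T)\to 0$. Applying the continuity assertion quoted in the excerpt (at the cardinality bound $rn$, converting the $H(\cdot|\cdot)$-bound to an $|H(\cdot)-H(\cdot)|$-bound via $H(\F\vee\G)=H(\F)+H(\G|\F)=H(\G)+H(\F|\G)$) yields $H(\F^n_{T_k})\to H(\F^n_T)$. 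Since the limit $h(T,\F)=\lim_n \tfrac1n H(\F^n_T)$ exists, $h(T,\F)=0$ iff $\liminf_n \tfrac1n H(\F^n_T)=0$, so
\[
\set{T : h(T,\F)=0}=\bigcap_{k\geq 1}\bigcup_{n\geq 1}\set{T : \tfrac1n H(\F^n_T)<1/k}
\]
is a $G_\delta$ set for every $\F$.

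It remains to restrict $\F$ to countably many. Fix a countable $L^1$-dense subset $D$ of the space of measurable functions $X\to[0,1]$ and let $\set{\F_m}_{m\in\N}$ enumerate all finite tuples from $D$. The claim to establish is
\[
\set{T : h(T)=0}=\bigcap_{m\geq 1}\set{T : h(T,\F_m)=0},
\]
which presents the left-hand side as a countable intersection of $G_\delta$ sets and finishes the proof. The inclusion $\subseteq$ is trivial; the reverse inclusion is the main obstacle. Given $\F=\set{f_1,\dots,f_r}$ and $\eps>0$, apply the continuity assertion at cardinality bound $1$ to obtain $\delta>0$ such that $\int\abs{f-g}\,d\mu<\delta$ implies $H(\A_f\,|\,\A_g)<\eps$. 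Choose $\F_m=\set{g_1,\dots,g_r}$ with $\dist(\F,\F_m)<\delta$. Because $T$ is an $L^1$-contraction, $\int\abs{T^i f_j-T^i g_j}\,d\mu<\delta$ for every $i,j$, hence $H(\A_{T^i f_j}\,|\,\A_{T^i g_j})<\eps$ uniformly in $i$. Subadditivity of conditional entropy gives
\[
H(\F^n_T\,|\,(\F_m)^n_T)\leq \sum_{i=0}^{n-1}\sum_{j=1}^r H(\A_{T^i f_j}\,|\,\A_{T^i g_j})\leq rn\eps,
\]
so $\tfrac1n H(\F^n_T)\leq \tfrac1n H((\F_m)^n_T)+r\eps$ and, passing to the limit, $h(T,\F)\leq h(T,\F_m)+r\eps$. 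If $h(T,\F_m)=0$ for every $m$, letting $\eps\to 0$ (with $m$ chosen accordingly) forces $h(T,\F)=0$. The principal delicacy is the $n$-uniformity of this estimate: the cardinality-dependent continuity of $H$ cannot be invoked directly on $\F^n_T$ since its cardinality $rn$ grows with $n$, so the argument must proceed coordinate-by-coordinate, using the $L^1$-contractivity of $T$ to keep the per-coordinate error at most $\eps$ and the total error $r\eps$ independent of $n$.
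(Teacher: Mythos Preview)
Your proof is correct and follows essentially the same route as the paper's: openness/continuity of $T\mapsto \tfrac1n H(\F^n_T)$ in the strong operator topology, reduction to a countable dense family of collections via $L^1$-separability, and the key $n$-uniform approximation $\tfrac1n H(\F^n_T)\leq \tfrac1n H((\F_m)^n_T)+r\eps$ obtained coordinate-by-coordinate from the $L^1$-contractivity of $T$. One small slip: the displayed set $\bigcap_{k\geq 1}\bigcup_{n\geq 1}\{\cdots\}$ encodes $\inf_n=0$, not the $\liminf_n=0$ your preceding sentence justifies; either write $\bigcap_k\bigcap_N\bigcup_{n\geq N}\{\cdots\}$ (as the paper does---still $G_\delta$) or invoke subadditivity of $n\mapsto H(\F^n_T)$ so that $\lim=\inf$ by Fekete.
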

\begin{proof}
Fix $\eps>0$, a positive integer $n$ and a finite collection $\F$ of $r$ measurable functions $X\to[0,1]$.
We will show that the set 
\[
U(\eps,n,\F)=\left\{T \in \DS(\mu):\frac1nH(\F^n)<\eps\right\}
\]
is open.
Let $T\in U(\eps,n,\F)$ and let $\frac1nH(\F^n)=\gamma<\eps$. Choose $\delta>0$ so that $\dist(\G,\G')<n\delta$ guarantees that $H(\G|\G')+H(\G'|\G) < \eps-\gamma$ for all collections $\G,\G'$ containing $r$ functions. Consider an open set 
\[
C=\{P\in \DS(\mu): \forall k=0,...,n-1\ \forall f\in\F\ \pnorm{1}{PT^{k-1}f-T^kf}<\delta\}
\]
Note that if $P$ lies in $C$ we have 
\[
\begin{split}
\pnorm{1}{P^kf-T^kf} \leq \pnorm{1}{P^kf-P^{k-1}Tf} + \pnorm{1}{P^{k-1}Tf-P^{k-2}T^2f} + ...\\
... + \pnorm{1}{PT^{k-1}f-T^kf} < k\delta
\end{split}
\]
for every $f\in\F$ and $k=0,1,...,n-1$, because $P$ is a contraction. Then $H(T^kF|P^kF) <\eps$ implying that
\[
H(\bigvee_{k=0}^{n-1}P^kF|\bigvee_{k=0}^{n-1}T^kF) 
\leq \sum_{k=0}^{n-1} \big(H(P^kF|T^kF)\big) < n(\epsilon-\gamma)
\]
and
\[
\frac1n H(\bigvee_{k=0}^{n-1}P^kF) \leq \frac1n H(\bigvee_{k=0}^{n-1}T^kF) + \frac1n H(\bigvee_{k=0}^{n-1}P^kF|\bigvee_{k=0}^{n-1}T^kF) < \epsilon
\]
Consequently, $U(\eps,n,\F)$ is open. 

Since $L^1$ is separable we can choose a countable dense subset $\mathbb{F}$ of the set of all integrable functions with range in $[0,1]$. Now
\[
\mathcal{U}=\bigcap_m \bigcap_r \bigcap_{\F\in\mathbb{F}^r} \bigcap_N \bigcup_{n\geq N} U(\frac1m,n,\F)
\]
is a $G_\delta$ set. We will prove that this is exactly the set of operators having zero entropy.

If $T$ has entropy zero then it clearly belongs to this set.

On the other hand, let again $\eps>0$, $n\in\N$ and let $\F$ be a collection of~$r$ functions. For $T\in\mathcal{U}$ choose $\tilde\F\in\mathbb{F}^r$ so that $\dist(\F,\tilde\F)$ is small enough to ensure that $H(\F|\tilde\F)<\eps/2$. Then also $H(T^k\F|T^k\tilde\F)<\eps/2$ for $k\in\N$ and, consequently,
\[
H(\F^n_T|{\tilde\F}^n_T)\leq\sum_{k=0}^{n-1}H(T^k\F|T^k\tilde\F)<n\eps/2
\]
Therefore,
\[
H(\F^n_T)\leq H(\tilde\F^n_T)+H(\F^n_T|\tilde\F^n_T) <  H(\tilde\F^n_T) + n\eps/2
\]
For any $m$ we can find arbitrarily large $n$ for which $\frac1n H(\tilde\F^n_T)<\frac1m$, so taking $m>\eps/2$ we obtain the existence of a large $n$ such that $\frac1n H(\F^n_T)<\eps$. Finally, we obtain a subsequence of $\frac1nH(\F^n_T)$ tending to zero, but since the whole sequence is convergent, it converges to zero. The collection $\F$ was chosen freely, so the entropy of $T$ is zero.
\end{proof}

\begin{rem}
Note that it follows that the set is also a $G_\delta$ set in the norm topology, as it has fewer open sets than the strong operator topology.
\end{rem}

From lemma \ref{Gdelta} and corollary \ref{dense} it follows that:
\begin{thm}	\label{new_Rokhlin}
The set of doubly stochastic operators with zero entropy is residual in strong operator topology and in the norm topology of $L^1(\mu)$.
\end{thm}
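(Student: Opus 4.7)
The plan is to combine the two preceding results. Corollary~\ref{dense} establishes density of the entropy-zero set in both the strong operator topology and the norm topology, while Lemma~\ref{Gdelta} together with the subsequent Remark establishes the $G_\delta$ property in both topologies. A set which is simultaneously dense and $G_\delta$ is, by the standard terminology, residual, so the theorem follows at once in each of the two topologies.

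To make the Baire-category content transparent --- namely that the complement of the entropy-zero set is meager and that the set itself is in particular of second category --- I would briefly note that $\DS(\mu)$ is a Baire space in each of the two topologies under consideration. In the norm topology this is because $\DS(\mu)$ is a norm-closed subset of the Banach space of bounded operators on $L^1(\mu)$, hence itself a complete metric space. In the strong operator topology it is because $\DS(\mu)$ is a closed subset (conditions (i)--(iii) pass to SOT-limits) of the unit ball of $L(L^1(\mu))$, which, since $L^1(\mu)$ is separable, can be equipped with the complete metric $d(T,S)=\sum_n 2^{-n}\min(1,\pnorm{1}{Tf_n-Sf_n})$ for a countable dense $\{f_n\}\subset L^1(\mu)$, and thus is Polish.

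There is essentially no obstacle remaining in this last step: the real work has already been carried out in Lemma~\ref{Gdelta}, where a countable dense family $\mathbb F$ of integrable $[0,1]$-valued functions together with the continuity of conditional entropy was used to reduce the uncountably indexed definition of zero entropy to a countable intersection of open sets $U(\tfrac1m,n,\F)$. With that in hand, Theorem~\ref{new_Rokhlin} is nothing more than the slogan \emph{``dense $G_\delta$ equals residual''}, applied separately in the norm and in the strong operator topology, together with the observation that $\DS(\mu)$ is a Baire space in each case.
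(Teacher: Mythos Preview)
Your proof is correct and follows exactly the paper's approach: the paper simply says ``From lemma~\ref{Gdelta} and corollary~\ref{dense} it follows that'' and states the theorem, so your argument is the same combination of dense plus $G_\delta$, with the welcome addition of spelling out why $\DS(\mu)$ is a Baire space in each topology (a point the paper leaves implicit).
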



\section{Decompositions of the domain}
\newcommand{\uni}[1]{{{#1}_{\mathrm{uni}}}}
\newcommand{\cnu}[1]{{{#1}_{\mathrm{cnu}}}}
\newcommand{\rev}[1]{{{#1}_{\mathrm{rev}}}}
\newcommand{\aws}[1]{{{#1}_{\mathrm{aws}}}}

We recall basic facts on the Jacobs-de Leeuw-Glicksberg (JdLG-) decomposition and Nagy-Foia\c{s} (NF-) decomposition. Our sources of information are \cite{EFHN} in case of JdLG-decomposition and \cite{NF} for NF-decomposition. 

We say that a subspace $W$ of the space $L^p(\mu)$ is \emph{invariant} with respect to a doubly stochastic operator $T:L^p(\mu)\to L^p(\mu)$ if $TW\subset W$. 

\begin{defn}
A \emph{JdLG-decomposition of the space $L^p(\mu)$ associated with a doubly stochastic operator $T$} is a pair of $T$-invariant subspaces $\rev{E}$ and $\aws{E}$ such that $L^p(\mu) = \rev{E} \oplus \aws{E}$, where $\rev{E}$ is the range and $\aws{E}$ is the kernel of a unique minimal idempotent of the weak operator closure of the semigroup $\{I,T,T^2,...\}$.

The spaces $\rev{E}$ and $\aws{E}$ are called the \emph{reversible part} and the \emph{almost weakly stable part} of the space, respectively. 
\end{defn}
The existence of a unique minimal idempotent follows from the famous Ellis' theorem proved in \cite{E} and such decomposition can be obtained even in more general situations. It is clear from the definition that this decomposition is unique. For our purposes, the following characterizations will be of greatest importance.
\begin{thm}	\label{thm:criteria}
If $T$ is a doubly stochastic operator (or just a contraction) on $L^p(\mu)$ and $\rev{E} \oplus \aws{E}$ is its JdLG-decomposition then:
\begin{enumerate}
	\item $\rev{E}=\cllin\{f\in L^p(\mu): \exists\lambda\in\C, |\lambda|=1\ s.t.\ Tf=\lambda f\}$ (see \cite{EFHN}, thm.16.33),
	\item for every $f\in\aws{E}$ either $\{T^nf:n\in\N\}$ is not precompact in $L^p$ norm topology or $\inf_{n\in\N}||T^nx||=0$ (see \cite{EFHN}, thm.16.29).
\end{enumerate}
\end{thm}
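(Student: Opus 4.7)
The plan is to treat both claims by leveraging the minimal idempotent $Q$ in the WOT-closure $\mathcal{S}$ of $\{T^n : n \geq 0\}$, whose range is $\rev{E}$ and whose kernel is $\aws{E}$.

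For (1), I would first verify the easy inclusion $\supseteq$: if $Tf = \lambda f$ with $|\lambda|=1$, then $\{T^n f : n \in \N\} = \{\lambda^n f : n \in \N\}$ is norm-precompact, so $f$ is almost periodic in the weak operator sense; consequently $Qf = f$, giving $f \in \rev{E}$. Closedness and $T$-invariance of $\rev{E}$ then absorb the closed linear span. For the reverse inclusion, the structural step is to show that $T|_{\rev{E}}$ is invertible and that the WOT-closure of $\{T^n|_{\rev{E}} : n \geq 0\}$ forms a compact abelian topological group $K$ of isometries on $\rev{E}$. Granted this, I would invoke the Peter-Weyl theorem (equivalently, the spectral theorem for compact abelian groups of unitaries) to decompose $\rev{E}$ into joint eigenspaces of the characters of $K$; each character evaluated at $T|_{\rev{E}}$ yields a unimodular eigenvalue of $T$, and the associated joint eigenvectors have closed linear span equal to $\rev{E}$.

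For (2), I would argue by contradiction: fix $f \in \aws{E}$ and assume simultaneously that $\{T^n f\}$ is norm-precompact and $\inf_n \norm{T^n f} > 0$. The central ingredient is the JdLG characterization that $f \in \aws{E}$ is equivalent to $0$ being a weak cluster point of $\{T^n f\}$ --- otherwise a nonzero $Q$-fixed vector could be extracted from the weakly closed convex hull of the orbit via a Ryll-Nardzewski-type fixed point argument, contradicting $Qf = 0$. Picking a subsequence $T^{n_k}f \to 0$ weakly and, using norm-precompactness, passing to a further subsequence converging in norm, the norm limit must coincide with the weak limit $0$, contradicting the positive infimum.

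The main obstacle is the structural claim invoked in (1): that the WOT-closure of $\{T^n|_{\rev{E}}\}$ is a compact \emph{group} of isometries. Upgrading the semigroup to a group requires minimality of $Q$ (so that inverses lie in the closure), while the isometry property combines invertibility with the contractivity of $T$; both rest on careful continuity arguments in the WOT that form the technical heart of the Jacobs-de Leeuw-Glicksberg theory. The Peter-Weyl step and the subsequence extraction in (2) are then comparatively routine soft-analysis applications.
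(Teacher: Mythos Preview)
The paper does not prove this theorem at all: it is stated as a quotation of Theorems~16.29 and~16.33 from \cite{EFHN}, with no argument supplied. So there is no ``paper's own proof'' to compare against; what you have written is, in outline, precisely the standard proof one finds in \cite{EFHN}. In that sense your proposal is correct and aligned with the source the paper relies on.

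Two small remarks on execution. In part~(1), your invocation of Peter--Weyl is phrased for unitaries on a Hilbert space, but the statement is for $L^p(\mu)$; you need the Banach-space version, namely that a norm-continuous representation of a compact abelian group on a Banach space has linearly dense common eigenvectors (via integration against characters). This is routine but worth saying explicitly. In part~(2), your subsequence extraction tacitly assumes that $0$ is a \emph{sequential} weak cluster point of $\{T^nf\}$; in general one only gets a net $T^{n_\alpha}f\to 0$ weakly (coming from $Q$ as a WOT-limit of powers). The argument still goes through: the net lives inside the norm-precompact set $\{T^nf:n\in\N\}$, so a subnet converges in norm, and the norm limit must equal the weak limit $0$, contradicting $\inf_n\|T^nf\|>0$.
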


Now assume that $\mathscr{H}$ is a Hilbert space and $T$ is a contraction of $\mathscr{H}$. We say that $W$ \emph{reduces} $\mathscr{H}$ if both $W$ and $W^\perp$ are invariant (or, equivalently, $W$ is invariant with respect both to $T$ and $T^*$). An operator $T$ on a Hilbert space $\mathscr{H}$ is \emph{completely non-unitary (c.n.u.)} if there is no reducing 
subspace on which it is unitary.

\begin{defn}
Let $T$ be a contraction of a Hilbert space $\mathscr{H}$. A \emph{NF-decomposition} is a decomposition $\mathscr{H}= \uni{\mathscr{H}}\oplus \cnu{\mathscr{H}}$ into an orthogonal sum of two subspaces reducing $T$, such that $T|{}_\uni{\mathscr{H}}$ is unitary and $T|{}_\cnu{\mathscr{H}}$ is c.n.u. The spaces $\uni{\mathscr{H}}$ and $\cnu{\mathscr{H}}$ are the \emph{unitary part} and the \emph{completely non-unitary part} of $\mathscr{H}$, respectively, and the restrictions of $T$ to these subspaces are the \emph{unitary part} and the \emph{completely non-unitary part} of $T$ (compare \cite{NF}, thm3.2).
\end{defn}

Just as in the previous case,  the decomposition is unique. Note that for a unitary operator the c.n.u. part is trivial, while for a c.n.u. operator the unitary part is trivial.
A doubly stochastic operator on $L^2(\mu)$ is never c.n.u., because the subspace consisting of constants is a reducing subspace on which $T$ is unitary.
Roughly saying, the unitary part captures all the invertibility of a doubly stochastic operator in the sense that if there is an invertible pointwise factor then there is a closed invariant subspace of $L^2(\mu)$. Moreover, if $T$ is a Koopman operator of an invertible map then $\cnu{L^2(\mu)}=\{0\}$. In other cases the c.n.u. part is non-trivial and it can be the whole $L_0^2(\mu)$---the orthogonal complement of constants---even for pointwise operators, e.g., for a one-sided shift. It is easy to see that if $\rev{\mathscr{H}} \oplus \aws{\mathscr{H}}$ is a JdLG-decomposition and $\uni{\mathscr{H}}\oplus \cnu{\mathscr{H}}$ is NF-decomposition of a Hilbert space $\mathscr{H}$ then 
\[
\rev{\mathscr{H}} \subset \uni{\mathscr{H}}\qquad \mathrm{and} \qquad \aws{\mathscr{H}} \supset \cnu{\mathscr{H}}.
\]


\section{Discrete spectrum versus nullity}

One of the basic definitions in ergodic theory is that of a discrete spectrum of a measure preserving transformation. In more general setup it sounds as follows:
\begin{defn}
 A power bounded operator $T$ on a Banach space $V$ has \emph{discrete spectrum} if 
\[
V=\cllin\{v\in V: \exists \lambda\in\C,\ |\lambda|=1, \ s.t.\  Tv=\lambda v\}
\]
\end{defn}
Clearly, it is applicable to the case of doubly stochastic operators, because $\pnorm{p}{T^n}=1$ for all $n\in\N$. Similarly to the case of pointwise maps, we will say that:
\begin{defn}
A doubly stochastic operator $T$ is \emph{null} if $h_A(T)=0$ for every sequence $A=\{t_1<t_2<t_3<...\}$ of positive integers.
\end{defn}
We now state the theorem which we aim to generalize.

\begin{thm}[Kushnirenko \cite{K}]	\label{Kushnirenko}
A transformation $T$ has discrete spectrum if and only if $T$ is null.
\end{thm}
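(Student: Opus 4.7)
The plan is to prove the two implications separately. For the forward direction (discrete spectrum implies null), I would fix a sequence $A=(i_k)$ and a finite partition $\xi$ of $X$, then approximate the indicator functions of the atoms of $\xi$ in $L^2(\mu)$ by finite linear combinations of eigenfunctions $f_1,\dots,f_d$ with eigenvalues $\lambda_1,\dots,\lambda_d\in\torus$. Continuity of $H(\cdot|\cdot)$ reduces $h_A(T,\xi)$ to the analogous quantity for a partition $\eta$ measurable with respect to $(f_1,\dots,f_d)$. Since $T^{-i_k}f_j=\overline{\lambda_j^{i_k}}f_j$, every translate $T^{-i_k}\eta$ still lies in the $\sigma$-algebra generated by $f_1,\dots,f_d$; after approximating $\eta$ in $L^1$ by a partition whose pullback to $\mathbb{C}^d$ is polyhedral, the $n$-fold join is cut out by $O(n)$ hyperplanes in $\mathbb{C}^d\simeq\mathbb{R}^{2d}$ with a bounded number of directions, giving at most $O(n^{2d})$ atoms. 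Hence $H(\bigvee_{k=1}^n T^{-i_k}\eta)=O(\log n)$, so $h_A(T,\xi)=0$, and taking the supremum in $\xi$ yields $h_A(T)=0$.

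For the converse, I would argue the contrapositive: if $T$ fails to have discrete spectrum, produce a sequence $A$ with $h_A(T)>0$. The orthogonal complement $\mathscr{H}_{wm}\subset L^2(\mu)$ of the closed span of eigenfunctions is non-trivial and $T$-invariant, and the restriction $U_T|_{\mathscr{H}_{wm}}$ is weakly mixing; equivalently, every $f\in\mathscr{H}_{wm}$ has continuous spectral measure and $\frac1N\sum_{k=1}^N|\langle U_T^kf,g\rangle|\to 0$ for all $g\in\mathscr{H}_{wm}$. Since tensor powers of a weakly mixing operator remain weakly mixing, the same Cesàro decay holds on $\mathscr{H}_{wm}^{\otimes m}$ for every $m$. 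One then finds a measurable set $B$ with $0<\mu(B)<1$ whose centred indicator $g=\ind_B-\mu(B)$ has non-zero projection onto $\mathscr{H}_{wm}$ — otherwise $\mathscr{H}_{wm}$ would be orthogonal to every centred bounded function, forcing $\mathscr{H}_{wm}=\{0\}$. Set $\xi=\{B,X\setminus B\}$.

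I would now inductively select $i_1<i_2<\dots$ to produce asymptotic independence of the translates $T^{-i_k}\xi$. At stage $n$, having fixed $i_1,\dots,i_n$, the centred products $\prod_{k=1}^n U_T^{i_k}g_{s_k}$ (with $g_{s_k}=\ind_{B_{s_k}}-\mu(B_{s_k})$ and $s_k\in\{0,1\}$) sit in a subspace on which $U_T^{i_{n+1}}$ has Cesàro-vanishing correlations against any single translate of $g_s$, so $i_{n+1}$ can be chosen large enough that every relevant inner product is smaller than $2^{-n}$. Expanding $\mu\bigl(\bigcap_{k=1}^nT^{-i_k}B_{s_k}\bigr)=\int\prod_{k=1}^n(\mu(B_{s_k})+U_T^{i_k}g_{s_k})\,d\mu$ and summing the error contributions yields $\mu\bigl(\bigcap_{k=1}^nT^{-i_k}B_{s_k}\bigr)\to\prod_{k=1}^n\mu(B_{s_k})$, hence $\tfrac1n H(\bigvee_{k=1}^nT^{-i_k}\xi)\to H(\xi)>0$, so $h_A(T)\geq H(\xi)>0$.

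The main obstacle is the joint independence step rather than any single spectral computation: one must simultaneously control all product correlations across $i_1,\dots,i_n$, not merely pairwise decorrelations, and this is where the inheritance of weak mixing (equivalently, continuous spectrum) by the tensor powers of $U_T|_{\mathscr{H}_{wm}}$ is indispensable. A secondary technical point is that the projection of $\ind_B-\mu(B)$ onto $\mathscr{H}_{wm}$ may not be pure, so the expansion of $\mu(\bigcap T^{-i_k}B_{s_k})$ produces additional cross terms involving the discrete-spectrum component; these must be absorbed either by choosing $B$ so that the discrete component is negligible or by refining $\xi$ using level sets of a bounded element of $\mathscr{H}_{wm}$ itself. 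Organising the diagonal selection of $(i_k)$ so that finitely many prescribed joint correlations are simultaneously below the target threshold, while preserving $i_{n+1}>i_n$, is the classical bookkeeping at the heart of Kushnirenko's original argument.
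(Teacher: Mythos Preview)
The paper does not prove this theorem; it is quoted as Kushnirenko's classical result and then generalised. The argument implicit in the paper (Lemmas preceding Corollary~\ref{cor:compact_orbits}, specialised to Koopman operators) follows Kushnirenko's own route: one shows that $T$ is null if and only if every orbit $\{U_T^nf:n\geq 0\}$ is $L^2$-precompact, and discrete spectrum is then identified with precompactness of all orbits. Your forward direction, via polyhedral approximation in the eigenfunction coordinates and a polynomial atom count, is correct and is a genuinely different, more combinatorial argument; it bypasses the Rokhlin-metric compactness used in the paper.

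Your reverse direction, however, has a real gap which you flag but do not close. Write $g=\ind_B-\mu(B)=g_d+g_c$ with $g_d$ in the discrete-spectrum part and $0\neq g_c\in\mathscr{H}_{wm}$. In the expansion of $\mu\bigl(\bigcap_k T^{-i_k}B_{s_k}\bigr)$ the terms built only from $g_d$-factors do \emph{not} decay, whatever sequence $(i_k)$ you pick: already a two-point term $\int (g_d\circ T^{i_1})(\overline{g_d}\circ T^{i_2})\,d\mu$ has constant modulus when $g_d$ is an eigenfunction. Hence the translates $T^{-i_k}\xi$ do not become asymptotically independent and $\tfrac1n H(\bigvee_k T^{-i_k}\xi)\to H(\xi)$ fails. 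Neither proposed remedy works in general: requiring $g_d=0$ asks that $B$ be independent of the Kronecker factor, which you have not established, and level sets of a bounded $h\in\mathscr{H}_{wm}$ produce indicators whose centred versions again have nonzero discrete component. Your appeal to weak mixing of tensor powers is also misplaced: $U^{\otimes m}$ acts on $L^2(X^m)$, whereas the $m$-fold pointwise product $\prod_k g_c\circ T^{i_k}$ lives in $L^2(X)$ and may well have a nonzero projection onto the discrete part, so the Ces\`aro decorrelation you need does not follow. The clean exit is exactly Kushnirenko's: drop independence entirely, take any $f$ with non-precompact orbit, extract an $\eps$-separated subsequence $(T^{i_k}f)$, and use a pigeonhole lemma of the type of Lemma~\ref{Kush} to force $H\bigl(\A_{T^{i_n}f}\,\big|\,\bigvee_{k<n}\A_{T^{i_k}f}\bigr)\geq\delta$ at every step, giving $h_A(T,\{f\})\geq\delta>0$.
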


This equivalence fails for doubly stochastic operators. For example $Tf=\int f\,d\mu$ does not have discrete spectrum, but $h_A(T)=0$ for every sequence $A$, which shows that nullity does not imply discrete spectrum. 
On the other hand, a contraction may have a linearly dense set of eigenvectors, though its unitary part is reduced to constants. Indeed, let $R$ be an irrational rotation on the circle~$S^1$, $Rz=\alpha z$ and let $Tf=\frac12 f\circ R + \frac12\int f\,d\lambda$, where $\lambda$ is the Haar probability measure. Clearly, each $e_n(z)=z^n$ is a ``c.n.u eigenfunction'' for eigenvalue $\frac12 \alpha^n$.
These examples show that for a general doubly stochastic operators null 
sequential entropy does not imply anything about existence or density of 
eigenvectors.

\begin{thm}	\label{thm:main}
The following conditions are equivalent for a doubly stochastic operator $T:L^p(\mu)\to L^p(\mu)$:
\begin{enumerate}
	\item $T$ is null,
	\item $L^p(\mu)=V \oplus W$, where $T$ has discrete spectrum on $V$ and $\lim_{n\to\infty} \pnorm{p}{T^nf} = 0$ for every $f\in W$,
	\item if $\rev{E} \oplus \aws{E}$ is a JdLG-decomposition of $L^p(\mu)$ associated with $T$, then $\lim_{n\to\infty} \pnorm{p}{T^nf} = 0$ for every $f\in \aws{E}$.
\end{enumerate}
If $p=2$ then each of the above conditions is equivalent to:
\begin{enumerate}
\setcounter{enumi}{3}
	\item if $\uni{\mathscr{H}} \oplus \cnu{\mathscr{H}}$ is the NF-decomposition of $L^2(\mu)$ with respect to~$T$, then $T$ has discrete spectrum on $\uni{\mathscr{H}}$ and $\pnorm{2}{T^nf}$ converges to~0 for every $f\in\cnu{\mathscr{H}}$.
\end{enumerate}
\end{thm}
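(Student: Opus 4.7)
I would prove the cycle $(3)\Rightarrow(2)\Rightarrow(1)\Rightarrow(3)$ and then derive the $p=2$ equivalence of (3) with (4) from the inclusions $\rev{\mathscr{H}}\subset\uni{\mathscr{H}}$ and $\cnu{\mathscr{H}}\subset\aws{\mathscr{H}}$ recorded at the end of Section~4. The implication $(3)\Rightarrow(2)$ is immediate: take $V=\rev{E}$ and $W=\aws{E}$; by Theorem~\ref{thm:criteria}(1), $V$ is the closed linear span of unimodular eigenvectors of~$T$, so $T$ has discrete spectrum on it, while (3) is the required convergence condition on $W$.

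For $(2)\Rightarrow(1)$ I would adapt Kushnirenko's classical argument. Fix a sequence $A=(i_k)$ and a collection $\F=\{f_1,\dots,f_r\}$ of $[0,1]$-valued functions; decompose each $f_j=v_j+w_j$ with $v_j\in V$, $w_j\in W$, keeping both summands bounded via truncation. Discrete spectrum on $V$ makes each orbit $\{T^n v_j\}$ relatively compact in $L^1$, so for every $\eta>0$ a single finite family $\mathcal U$ of functions $\eta$-covers $\{T^{i_k}v_j\}_{k,j}$; since $\|T^{i_k}w_j\|_p\to 0$, the same family (enlarged to $2\eta$) eventually covers $T^{i_k}f_j$ as well. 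Consequently $\bigvee_{k=1}^n T^{i_k}\F$ is, up to $o(n)$ perturbation in $H$ (via the $L^1$-continuity of conditional entropy stated after the definition of $H$), dominated by the single fixed join $\bigvee_{u\in\mathcal U}\A_u$, whose entropy is \emph{bounded} independently of $n$. Dividing by $n$ and letting $\eta\to 0$ gives $h_A(T,\F)=0$.

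The implication $(1)\Rightarrow(3)$, which is the heart of the theorem, I would prove by contradiction. Suppose $f\in\aws{E}$ satisfies $\|T^nf\|_p\not\to 0$. By a careful use of Theorem~\ref{thm:criteria}(2) (handling the ``$\inf\|T^nf\|=0$'' alternative by replacing $f$ with a suitable iterate $T^Nf$ of large norm), one extracts a subsequence $(j_k)$ with $\|T^{j_k}f-T^{j_l}f\|_p\geq\delta>0$ for $k\neq l$; after truncation and affine normalization we may take $f\in[0,1]$ with $\int f\,d\mu\in(0,1)$, preserving the separation. To convert pairwise separation into linearly growing joint entropy I would invoke the ``flight'' characterization of $\aws{E}$: along a density-one subsequence $T^nf\to 0$ weakly. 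A further diagonal extraction then yields a subsequence along which the vectors $T^{j_k}f$ become pairwise almost orthogonal in $L^2$ (directly for $p=2$; in general via a bounded truncation into $L^2$). Approximate orthogonality translates into approximate \emph{independence} of the binary partitions $\A_{T^{j_k}f}$ of $X\times[0,1]$, so $\tfrac{1}{n}H(\bigvee_{k=1}^n\A_{T^{j_k}f})\to H(\A_f)>0$---a positive sequence entropy along $(j_k)$, contradicting nullity. The main obstacle is precisely this last upgrade: pairwise $L^1$-separation alone is insufficient (witness the alternating example $f,1-f,f,1-f,\dots$), and it is the JdLG flight property that supplies the needed approximate independence.

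For $p=2$ the equivalence of (3) and (4) follows almost mechanically from $\rev{\mathscr{H}}\subset\uni{\mathscr{H}}$ and $\cnu{\mathscr{H}}\subset\aws{\mathscr{H}}$. From (3): any $f\in\uni{\mathscr{H}}\cap\aws{\mathscr{H}}$ satisfies both $\|T^nf\|_2=\|f\|_2$ (unitarity) and $\|T^nf\|_2\to 0$, forcing $f=0$; hence $\uni{\mathscr{H}}=\rev{\mathscr{H}}$ has discrete spectrum by Theorem~\ref{thm:criteria}(1), and $\cnu{\mathscr{H}}\subset\aws{\mathscr{H}}$ gives the required norm condition. Conversely, (4) forces $\uni{\mathscr{H}}\subset\rev{\mathscr{H}}$, whence equality and $\cnu{\mathscr{H}}=\aws{\mathscr{H}}$, which is (3).
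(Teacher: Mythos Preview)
Your implications $(3)\Rightarrow(2)$, $(2)\Rightarrow(1)$, and the equivalence $(3)\Leftrightarrow(4)$ are essentially the paper's arguments (the paper packages $(2)\Rightarrow(1)$ as ``all orbits precompact $\Rightarrow$ null'', which is the easy half of Corollary~\ref{cor:compact_orbits}, and your covering argument is exactly that).

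The gap is in $(1)\Rightarrow(3)$, and it is twofold.

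First, you miss a one-line simplification that the paper states explicitly: for a doubly stochastic operator $\pnorm{p}{T^{n+1}f}\leq\pnorm{p}{T^nf}$, so $\inf_n\pnorm{p}{T^nf}=\lim_n\pnorm{p}{T^nf}$. Hence the alternative ``$\inf\pnorm{p}{T^nf}=0$'' in Theorem~\ref{thm:criteria}(2) is \emph{exactly} the desired conclusion, not a nuisance case to be handled; your proposed remedy (pass to an iterate $T^Nf$ of large norm) changes nothing, since the tail infimum is still zero. With monotonicity in hand, the paper's route is short: null $\Rightarrow$ every orbit precompact (Corollary~\ref{cor:compact_orbits}) $\Rightarrow$ for $f\in\aws{E}$ the second alternative of Theorem~\ref{thm:criteria}(2) holds $\Rightarrow$ $\pnorm{p}{T^nf}\to 0$.

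Second, and more seriously, your proposed mechanism for turning a non-precompact orbit into positive sequence entropy is incorrect. Approximate $L^2$-orthogonality of $T^{j_k}f$ and $T^{j_l}f$ does \emph{not} translate into approximate independence of the partitions $\A_{T^{j_k}f}$ and $\A_{T^{j_l}f}$ of $X\times[0,1]$: one has $(\mu\times\lambda)(A_g\cap A_h)=\int\min(g,h)\,d\mu$, and for nonnegative $g,h$ with $\int gh\approx 0$ this forces $\int\min(g,h)\approx 0$, whereas independence would require it to be close to $(\int g)(\int h)$. Since all iterates share the same integral $\int f$, after your affine normalization this product is a fixed positive number, so the partitions are far from independent (think of indicators of disjoint sets of equal measure). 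Moreover, your affine normalization to put $f$ into $[0,1]$ adds a constant and thus moves $f$ out of $\aws{E}$, so the flight property no longer applies to the function whose partitions you are analyzing. The paper avoids all of this by using Kushnirenko's pigeonhole device (Lemma~\ref{Kush}): from an $\eps$-separated sequence of two-set partitions one inductively selects $\xi_n$ with $H(\xi_n\mid\bigvee_{i<n}\xi_i)\geq\delta$, because small conditional entropy would force $\xi_n$ to be close to one of only finitely many coarsenings of $\bigvee_{i<n}\xi_i$. That argument needs only separation, not any form of independence or orthogonality.
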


Before the proof of this theorem let us state several lemmas. Walking in the footsteps of Kushnirenko we translate a part of his proof to the operator case.
\begin{lem}
Let $A=\{t_1<t_2<t_3<...\}$ be a sequence of positive integers.
Then $h_A(T)=0$ if and only if $h_A(T,\{f\})=0$ for every $f$.
\end{lem}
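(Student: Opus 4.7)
The forward direction ($h_A(T)=0 \Rightarrow h_A(T,\{f\})=0$ for every $f$) is immediate from the definition of $h_A(T)$ as a supremum over all finite collections, since singleton collections are in particular finite.

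For the reverse implication, the plan is to establish subadditivity of the sequence entropy $h_A(T,\cdot)$ in the collection variable and then induct on the cardinality of the collection. Recall that the static entropy $H(\F)$ is defined as the partition entropy $H_{\mu\times\lambda}(\A_\F)$, and the paper already notes the key identity $\A_{\F\vee\G}=\A_\F\vee\A_\G$. Standard subadditivity of partition entropy therefore gives $H(\F\vee\G)\leq H(\F)+H(\G)$. Since $T^{i_k}$ distributes across concatenation of collections (by the definition of $T^{i_k}\F$ as $\{T^{i_k}f : f\in\F\}$), we get
\[
\bigvee_{k=1}^{n} T^{i_k}(\F\vee\G) = \Bigl(\bigvee_{k=1}^{n} T^{i_k}\F\Bigr) \vee \Bigl(\bigvee_{k=1}^{n} T^{i_k}\G\Bigr),
\]
and applying $H$ together with subadditivity yields
\[
\tfrac1n H\Bigl(\bigvee_{k=1}^{n} T^{i_k}(\F\vee\G)\Bigr) \leq \tfrac1n H\Bigl(\bigvee_{k=1}^{n} T^{i_k}\F\Bigr) + \tfrac1n H\Bigl(\bigvee_{k=1}^{n} T^{i_k}\G\Bigr).
\]
Passing to $\limsup$ and using the elementary inequality $\limsup(a_n+b_n)\leq\limsup a_n+\limsup b_n$ gives the sought subadditivity $h_A(T,\F\vee\G)\leq h_A(T,\F)+h_A(T,\G)$.

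Now, given any finite collection $\F=\{f_1,\dots,f_r\}$, we can write $\F = \{f_1\}\vee\{f_2\}\vee\cdots\vee\{f_r\}$ and iterate the subadditivity to obtain
\[
h_A(T,\F) \leq \sum_{i=1}^{r} h_A(T,\{f_i\}).
\]
Under the hypothesis that $h_A(T,\{f\})=0$ for every $f\colon X\to[0,1]$, the right-hand side vanishes, so $h_A(T,\F)=0$ for every finite collection $\F$. Taking the supremum over such $\F$ gives $h_A(T)=0$.

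No serious obstacle is expected here; the argument is essentially a direct translation of the classical fact that for ordinary entropy $h(T,\xi\vee\eta)\leq h(T,\xi)+h(T,\eta)$, made possible by the subadditivity of $H_{\mu\times\lambda}$ on partitions of $\X$ and the fact that the map $\F\mapsto\A_\F$ carries concatenation of collections to join of partitions. The only mild care needed is that we work with $\limsup$ rather than an honest limit (since sequence entropy is defined via $\limsup$), but the inequality $\limsup(a_n+b_n)\leq\limsup a_n+\limsup b_n$ handles this cleanly.
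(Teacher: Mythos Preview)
Your proof is correct and follows essentially the same approach as the paper: the paper simply asserts that the reverse implication ``follows easily from subadditivity of entropy,'' and you have spelled out exactly that subadditivity argument (using $\A_{\F\vee\G}=\A_\F\vee\A_\G$, standard partition-entropy subadditivity, and the $\limsup$ inequality) in full detail.
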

\begin{proof}
By the definition of entropy, $h_A(T)\geq \sup_{f} h_A(T,\{f\})$, where the supremum ranges over the set of all measurable functions $f:X\to[0,1]$. On the other hand, it follows easily from subadditivity of entropy that if $h_A(T,\{f\})=0$ for every $f$ then $h_A(T,\F)=0$ for every collection $\F$ of measurable functions.
\end{proof}

\begin{lem}
Let $f:X\to[0,1]$ be a measurable function.
Then the closed orbit $\bar{O}_T(f)=\overline{\{T^nf:n=0,1,2,...\}}$ is compact if and only if $h_A(T,\{f\})=0$ for all sequences $A=\{t_1<t_2<t_3<...\}$.
\end{lem}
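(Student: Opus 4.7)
The plan is to treat the two implications separately. For ``compact orbit $\Rightarrow$ null sequence entropy'' I would approximate iterates of $f$ by a finite $\delta$-net in $\bar O_T(f)$ and use that a join of partitions drawn from a finite pool has entropy bounded independently of the length of the join. For the converse (contrapositive) I would convert non-compactness into a $\delta$-separated subsequence $(T^{s_k}f)_k$ and then inductively extract a further subsequence along which the indicator functions $Z_k = \ind_{A_{T^{s_k}f}} \in L^2(\mu \times \lambda)$ stay uniformly $L^2$-far from the (finite-dimensional) space of functions measurable with respect to the previously chosen partitions, which forces linear entropy growth.

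\emph{Forward direction.} Fix $\eta > 0$. The continuity assertion recorded after the definition of $H(\F|\G)$, applied with $r = 1$, supplies $\delta > 0$ such that $\pnorm{1}{h - h'} < \delta$ implies $H(\{h\}|\{h'\}) < \eta$. Choose a $\delta$-net $\{g_1,\ldots,g_m\}$ for $\bar O_T(f)$. Given a sequence $A = (t_k)$ and $N$, pick $j_k$ with $\pnorm{1}{T^{t_k}f - g_{j_k}} < \delta$ and form $\F'_N = \{g_{j_1}, \ldots, g_{j_N}\}$. The crucial point is that $\bigvee_{k=1}^N \A_{g_{j_k}}$ is a coarsening of $\bigvee_{j=1}^m \A_{g_j}$, which has at most $2^m$ atoms, so $H(\F'_N) \leq m \log 2$ regardless of $N$. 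Sub-additivity of conditional entropy and the choice of $\delta$ give $H(\F^N_T | \F'_N) \leq \sum_k H(\A_{T^{t_k}f} | \A_{g_{j_k}}) < N\eta$, hence $\tfrac{1}{N}H(\F^N_T) \leq \tfrac{m\log 2}{N} + \eta$. Taking $N \to \infty$ and then $\eta \to 0$ yields $h_A(T,\{f\}) = 0$.

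\emph{Backward direction.} Suppose $\bar O_T(f)$ is not compact; extract $\delta > 0$ and $s_1 < s_2 < \ldots$ with $\pnorm{1}{T^{s_k}f - T^{s_l}f} \geq \delta$ for $k \neq l$. The identity $\pnorm{2}{Z_k - Z_l}^2 = \pnorm{1}{Z_k - Z_l} = \pnorm{1}{T^{s_k}f - T^{s_l}f}$ (valid because each $Z_k$ is $\{0,1\}$-valued) shows that $(Z_k)$ is $\sqrt{\delta}$-separated in $L^2(\mu \times \lambda)$. Inductively choose $k_1 < k_2 < \ldots$ as follows: having chosen $k_1,\ldots,k_{i-1}$, set $\xi_{i-1} = \bigvee_{j<i}\A_{T^{s_{k_j}}f}$, a partition with at most $2^{i-1}$ atoms, so that $L^2(\sigma(\xi_{i-1}))$ is finite-dimensional. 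If infinitely many remaining $Z_k$ lay within $L^2$-distance $\sqrt{\delta}/4$ of this subspace, their projections would admit a Cauchy sub-subsequence by finite-dimensionality, forcing the corresponding $Z_k$ pairwise within distance $< \sqrt{\delta}$ --- contradicting $\sqrt{\delta}$-separation. Pick $k_i$ outside this finite exceptional set. Then $\pnorm{2}{Z_{k_i} - \mathbb{E}[Z_{k_i}|\xi_{i-1}]}^2 \geq \delta/16$, and since $Z_{k_i}$ is Bernoulli this equals $\mathbb{E}[p_i(1-p_i)]$ with $p_i = \mathbb{E}[Z_{k_i}|\xi_{i-1}]$. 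The elementary inequality $-p\log p - (1-p)\log(1-p) \geq c_0 \cdot p(1-p)$ (valid with some universal $c_0 > 0$) then gives $H(\A_{T^{s_{k_i}}f} | \xi_{i-1}) \geq c_0\delta/16$, so by the chain rule $\tfrac{1}{N}H\bigl(\bigvee_{i=1}^N \A_{T^{s_{k_i}}f}\bigr) \geq c_0\delta/16$, and $h_A(T,\{f\}) > 0$ along $A = (s_{k_i})$.

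The main obstacle is the backward direction, where a purely topological defect (non-precompactness in $L^1$) must be transformed into a quantitative information-theoretic statement. The bridge has two essential components: the transfer of $L^1$-separation of the iterates $T^{s_k}f$ into $L^2$-separation of the indicators $Z_k$, via the Bernoulli identity above; and the conversion of distance-from-a-finite-sub-algebra into conditional entropy, via $h(p) \geq c_0\, p(1-p)$. The inductive extraction itself is a routine Bolzano--Weierstrass argument, but recognising that one should work with the $\{0,1\}$-valued indicators on $X \times [0,1]$ and their conditional expectations, rather than with the iterates of $f$ in $L^1(\mu)$ directly, is where the main insight lies.
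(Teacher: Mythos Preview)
Your proof is correct. The forward direction is essentially identical to the paper's: both pick a finite net in the compact orbit, bound the static entropy of the corresponding join by a constant independent of $N$, and control the remainder via conditional entropy.

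The backward direction, however, takes a genuinely different route from the paper. The paper follows Kushnirenko: it invokes an auxiliary lemma (stated but not proved there) asserting that $H(\zeta|\xi)<\delta$ forces $\zeta$ to be $\eps/2$-close in the Rokhlin metric to some coarsening $\xi'\prec\xi$. Since $\xi=\bigvee_{i<n}\xi_i$ has only finitely many coarsenings, an infinite $\eps$-separated family $(\A_{g_m})$ cannot be entirely absorbed, and one extracts $\xi_n$ with $H(\xi_n|\xi)\geq\delta$. Your argument bypasses this lemma completely: you transfer $L^1$-separation of the iterates to $L^2$-separation of the indicators $Z_k$ on $X\times[0,1]$, use finite-dimensionality of $L^2(\sigma(\xi_{i-1}))$ together with Bolzano--Weierstrass to find some $Z_{k_i}$ at uniform $L^2$-distance from that subspace, and then convert $\|Z_{k_i}-\mathbb{E}[Z_{k_i}|\xi_{i-1}]\|_2^2 = \mathbb{E}[p_i(1-p_i)]$ into a lower bound on $H(\A_{T^{s_{k_i}}f}|\xi_{i-1})$ via the elementary inequality $h(p)\geq c_0\,p(1-p)$. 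What this buys you is a fully self-contained argument with no appeal to the Kushnirenko lemma; what the paper's approach buys is that it stays entirely within the Rokhlin-metric formalism and makes the pigeonhole structure more transparent.
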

\begin{proof}
Fix $\eps >0$ and $A=\{t_n:n\in\N\}$. Let us abbreviate $\A_{T^n f}$ by $\xi_n$.
Let $\mathcal{Z}$ be the set of all countable measurable partitions of $X$ with $H_\mu(\xi)<\infty$. 
We remind that in the context of entropy the natural distance between two partitions $\xi$ and $\zeta$ belonging to $\mathcal{Z}$ is given by the Rokhlin metric:
\[
\rho(\xi,\zeta) = H_\mu(\xi|\zeta) + H_\mu(\zeta|\xi).
\]

If $\bar{O}_T(f)$ is compact in $L^p(\mu)$ then $\{\A_g:g\in \bar{O}_T(f)\}$ is compact in~$\mathcal{Z}$. Indeed, $\lim_{n\to\infty} \pnorm{p}{g_n-g}=0$ implies that $\lim_{n\to\infty}\mu(A_{g_n}\symdiff A_g)=0$, so a convergent subsequence of $(g_n)_{n\in\N}$ induces a convergent subsequence of $(\A_{g_n})_{n\in\N}$. The closure of $\{\xi_{t_n}:n\in\N\}$ is also compact, hence it contains an $\eps$-dense set $\{\xi_{t_n}:1\leq n\leq N\}$.
Therefore, 
\begin{eqnarray*}
h_A(T,\{f\}) & = & \limsup_{n\to\infty} \frac1n H(\bigvee_{i=1}^n \xi_{t_i}) \\
& \leq & \limsup_{n\to\infty} \frac1n \Big(H(\bigvee_{i=1}^N \xi_{t_i}) + \sum_{j=N+1}^n H(\xi_{t_j}| \bigvee_{i=1}^{N} \xi_{t_i}\Big) \leq \eps
\end{eqnarray*}

Conversely, let $\bar{O}_T(f)$ be non-compact, i.e., for some $\gamma>0$ there is a sequence $g_n\in\bar{O}_T(f)$ satisfying $\pnorm{p}{g_m-g_n}\geq\gamma$ for any $m\not=n$. Then $\pnorm{1}{g_m-g_n}\geq \gamma^p$ implying that $\mu(A_{g_m}\symdiff A_{g_n})$ are separated from 0 for all pairs $m\not=n$. Therefore, $\rho(\A_{g_m},\A_{g_n}) >\eps$ for some $\eps>0$ and all $m\not=n$. We now inductively choose a sequence $\xi_n$ satisfying $\limsup_{n\to\infty} \frac1n H(\bigvee_{i=1}^n \xi_i)\geq \delta$ for some $\delta>0$. It suffices to ensure that $H(\xi_n|\bigvee_{i=1}^{n-1}\xi_i)\geq\delta$ for all $n>1$. The appropriate number $\delta>0$ is chosen with use of the following lemma. This is a slight modification of the one proved in \cite{K}, so we leave the proof as an exercise. We only hint that if $g\in \bar{O}_T(f)$ then $\int g\,d\mu = \int f\,d\mu $.
\begin{lem}	\label{Kush}
For every $\eps>0$ there exists a $\delta>0$ such that for any finite partition $\xi$ of $X\times [0,1]$ and any partition $\zeta=\A_g$ with $g\in \bar{O}_T(f)$ the inequality $H(\zeta|\xi)<\delta$ implies the existence of $\xi'\prec\xi$ such that $\rho(\zeta,\xi')<\frac\eps2$.
\end{lem}
Let $\xi_1=\A_{g_1}$. Note that there are only two partitions coarser than~$\xi_1$, namely, the trivial partition and $\xi_1$ itself. By the above lemma, if $H(\A_{g_m}|\xi_1)<\delta$ for all $m$, then each $\A_{g_m}$ is $\eps/2$-close (in Rokhlin metric) to one of these two partitions, which contradicts the fact that all partitions $\A_{g_m}$ are $\eps$-separated. Hence, there is a partition $\xi_2$ of the form $\A_{g_m}$, such that $H(\xi_2|\xi_1)\geq\delta$.
For the induction step, suppose that we have already found partitions $\xi_1,\xi_2,...,\xi_{n-1}$, chosen from $(\A_{g_m})_m$.
Let $\xi=\bigvee_{i=1}^{n-1}\xi_i$. Similarly as before, there are only finitely many partitions $\xi'$ coarser than $\xi$, and each $\A_{g_m}$satisfying $H(\A_{g_m}|\xi)<\delta$ is $\eps/2$-close to some $\xi'$ (by lemma \ref{Kush}). Since all partitions $\A_{g_m}$ are $\eps$-separated, the tail of the sequence $(\A_{g_m})_m$ contains some element $\xi_n$ satisfying $H(\xi_n|\xi)\geq\delta$.
(Note that it is irrelevant that $\xi'$ need not be of the form $\A_g$ for any $g$.)
\end{proof}
Putting these pieces together we obtain:
\begin{cor}	\label{cor:compact_orbits}
A doubly stochastic operator $T$ on $L^p(\mu)$ is null if and only if the closed orbit $\bar{O}_T(f)$ is compact for every $f\in L^p(\mu)$.
\end{cor}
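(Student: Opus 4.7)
The proof combines the two preceding lemmas, and the only genuine work is upgrading compactness of orbits from $[0,1]$-valued functions to arbitrary elements of $L^p(\mu)$.

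First I would chain the two lemmas: by definition $T$ is null iff $h_A(T)=0$ for every sequence $A$; the first lemma reduces this to $h_A(T,\{f\})=0$ for every $A$ and every measurable $f:X\to[0,1]$; the second lemma then identifies the latter with compactness of $\bar O_T(f)$ for every such $f$. This already settles the ``if'' direction of the corollary, since compactness of $\bar O_T(f)$ for every $f\in L^p(\mu)$ specializes to compactness for every $[0,1]$-valued $f$. It also settles the ``only if'' direction when $f$ is restricted to range in $[0,1]$.

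For the ``only if'' direction in full generality I would show that
\[
V=\{f\in L^p(\mu):\bar O_T(f)\text{ is compact}\}
\]
is a closed linear subspace of $L^p(\mu)$ that contains a dense subset. Linearity is immediate from linearity of $T$. Closedness follows from the fact that $T$ is a contraction on $L^p(\mu)$: if $f_n\to f$ in $L^p$ with each $\bar O_T(f_n)$ totally bounded, then for $\epsilon>0$ one chooses $n$ with $\pnorm{p}{f-f_n}<\epsilon/3$ and an $\epsilon/3$-net $\{T^{k_1}f_n,\dots,T^{k_m}f_n\}$ for the orbit of $f_n$; since $\pnorm{p}{T^k f-T^k f_n}\le\pnorm{p}{f-f_n}<\epsilon/3$ for all $k$, the shifted set $\{T^{k_1}f,\dots,T^{k_m}f\}$ is an $\epsilon$-net for $\{T^k f\}_k$. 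For density, I would observe that for real bounded $f$ with $a\le f\le b$ one has $f=a\ind+(b-a)g$ with $g:X\to[0,1]$, so $T\ind=\ind$ makes the orbit of $f$ an affine image of the orbit of $g$, which is compact; splitting into real/imaginary and positive/negative parts extends this to all bounded complex functions, a dense subset of $L^p(\mu)$. Hence $V=L^p(\mu)$.

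No step presents a serious obstacle. The only point requiring care is verifying that total boundedness of orbits is preserved under $L^p$-limits, which is exactly where the contraction property of doubly stochastic operators on $L^p(\mu)$ is used.
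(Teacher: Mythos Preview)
Your proof is correct and follows the same approach as the paper, which simply states the corollary as an immediate consequence of the two preceding lemmas (``Putting these pieces together we obtain'') without any further argument. You actually do more than the paper: you explicitly justify the passage from $[0,1]$-valued functions to arbitrary $f\in L^p(\mu)$ via the closed-subspace argument, a step the paper leaves entirely to the reader.
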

\begin{lem}	\label{lem:compact_orbits_on_rev}
If $T$ is a doubly stochastic operator then the closed orbit $\bar O_T(f)$ is compact for any $f\in \rev{E}$.
\end{lem}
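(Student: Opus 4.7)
The plan is to exploit the characterization of $\rev{E}$ from Theorem~\ref{thm:criteria}(1) as the closed linear span of unimodular eigenvectors of $T$, and build compactness of orbits in three escalating stages: for a single eigenvector, for a finite linear combination of eigenvectors, and finally for an arbitrary element of $\rev{E}$ via approximation.

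First I would handle the base case. If $Tf=\lambda f$ with $|\lambda|=1$, then $T^nf=\lambda^nf$ for every $n$, so $\{T^nf:n\in\NZ\}$ is contained in the compact circle $\{zf:|z|\le 1\}$ in $L^p(\mu)$; its closure is therefore compact. Next, for a finite linear combination $f=\sum_{j=1}^{k}c_jf_j$ of eigenvectors with $Tf_j=\lambda_jf_j$, $|\lambda_j|=1$, I would observe that $T^nf=\sum_{j}c_j\lambda_j^n f_j=\varphi(\lambda_1^n,\ldots,\lambda_k^n)$, where $\varphi\colon\torus^k\to L^p(\mu)$ is the continuous map $(z_1,\ldots,z_k)\mapsto\sum_j c_jz_jf_j$. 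Since $\torus^k$ is compact, so is $\varphi(\torus^k)$, and the orbit of $f$ is contained in this image; hence $\bar O_T(f)$ is compact.

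For the general case, let $f\in\rev{E}$ and fix $\eps>0$. By Theorem~\ref{thm:criteria}(1), I can choose a finite linear combination $g$ of unimodular eigenvectors with $\pnorm{p}{f-g}<\eps$. Because $T$ is a contraction on $L^p(\mu)$, one has $\pnorm{p}{T^nf-T^ng}\le\pnorm{p}{f-g}<\eps$ uniformly in $n$. By the previous stage $\bar O_T(g)$ is compact, so there exist $n_1,\ldots,n_m$ such that $\{T^{n_i}g\}$ is an $\eps$-net for $\{T^ng:n\in\NZ\}$. A triangle-inequality argument then shows that $\{T^{n_i}f\}$ is a $3\eps$-net for $\{T^nf:n\in\NZ\}$. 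Since $\eps$ was arbitrary, the orbit of $f$ is totally bounded in the complete space $L^p(\mu)$, and hence $\bar O_T(f)$ is compact.

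I do not expect any genuine obstacle here; the only point requiring a little care is that the approximation step works because $T$ is a contraction, which supplies the uniform-in-$n$ control needed to transfer a finite net from the orbit of $g$ to the orbit of $f$.
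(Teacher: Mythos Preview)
Your proof is correct and rests on the same underlying idea as the paper's: orbits of unimodular eigenvectors live on circles, and this compactness propagates to the closed linear span they generate. The paper compresses this into a single line---writing $f=\sum_n a_n e_n$ as a series in eigenvectors and observing that the orbit sits inside the compact ``box'' $\{\sum_n b_n e_n:|b_n|\le|a_n|\}$---whereas you reach the same conclusion by finite approximation and total boundedness, using the contraction property to make the approximation uniform along the orbit. Your route is a bit longer but arguably more transparent, since it avoids the implicit claim that every $f\in\rev{E}$ admits such a series expansion and that the infinite box is compact in $L^p$.
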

\begin{proof}
If $\{e_n:n\in\N\}$ is a linearly dense set  of eigenvectors, then the orbit of $f=\sum_{n=1}^\infty a_n e_n$ is a subset of the compact set $\{\sum_{n=1}^\infty b_n e_n: |b_n|\leq |a_n|\}$. 
\end{proof}
\begin{cor}
If $T$ is a doubly stochastic operator with discrete spectrum then $T$ is null.
\end{cor}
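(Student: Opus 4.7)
The plan is to deduce this corollary directly by combining three pieces already at hand: the characterization of the reversible part from Theorem \ref{thm:criteria}(1), the compactness of orbits on the reversible part (Lemma \ref{lem:compact_orbits_on_rev}), and the equivalence between nullity and compact orbits (Corollary \ref{cor:compact_orbits}).

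First I would observe that Theorem \ref{thm:criteria}(1) identifies $\rev{E}$ with $\cllin\{f\in L^p(\mu): Tf=\lambda f \text{ for some } \lambda\in\C \text{ with } |\lambda|=1\}$, which is exactly the subspace appearing in the definition of discrete spectrum. Thus, assuming $T$ has discrete spectrum is equivalent to saying $L^p(\mu)=\rev{E}$, i.e., the almost weakly stable part $\aws{E}$ of the JdLG-decomposition is trivial. Next, Lemma \ref{lem:compact_orbits_on_rev} applied to every $f\in L^p(\mu)=\rev{E}$ yields compactness of the closed orbit $\bar O_T(f)$ for every $f\in L^p(\mu)$. Finally, Corollary \ref{cor:compact_orbits} converts this orbit compactness into the conclusion that $h_A(T)=0$ for every increasing sequence $A$ of positive integers, i.e., $T$ is null.

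I do not expect any genuine obstacle here: once the characterization of $\rev{E}$ and the lemma on compactness of orbits in $\rev{E}$ are in place, the corollary is essentially a three-line chase. The only minor thing I would check is that the cited results indeed apply to arbitrary $f\in L^p(\mu)$ (and not only to $[0,1]$-valued functions), but both Lemma \ref{lem:compact_orbits_on_rev} and Corollary \ref{cor:compact_orbits} are stated in that generality, so no extra work is required.
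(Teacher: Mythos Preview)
Your proposal is correct and follows exactly the route the paper intends: the corollary is stated without a separate proof precisely because it is the immediate combination of Lemma~\ref{lem:compact_orbits_on_rev} and Corollary~\ref{cor:compact_orbits}, together with the observation (via Theorem~\ref{thm:criteria}(1)) that discrete spectrum means $L^p(\mu)=\rev{E}$. There is nothing to add.
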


\begin{proof}[Proof of thm.~\ref{thm:main}]
The third condition is just a reformulation of the second one, as $\rev{E}$ is exactly the set spanned by the eigenvectors of $T$. We concentrate on proving that the first two statements are equivalent. We remark that for a doubly stochastic operator we have $\inf_{n\in\N} ||T^nf||=\lim_{n\to\infty} ||T^nf||$.

Assume that $T$ is null. Let $V=\rev{E}$, $W=\aws{E}$, where $\rev{E} \oplus \aws{E}$ is a JdLG-decomposition of $L^p(\mu)$ associated with $T$. By definition of~$\rev{E}$, operator $T$ has discrete spectrum on $V$. By corollary~\ref{cor:compact_orbits} nullity is equivalent to all orbits being precompact, which in view of theorem~\ref{thm:criteria} forces all orbits in $\aws{E}$ to approach zero. 

On the other hand, assuming the second condition we obtain from lemma~\ref{lem:compact_orbits_on_rev} that all orbits of elements of $V$ are precompact. Clearly, also orbits of elements of $W$ are precompact (simply because they are convergent sequences), so all functions from $V \oplus W$ have precompact orbits and it follows from corollary~\ref{cor:compact_orbits} that $T$ is null.

To prove the last statement note that it is always true that $\rev{E} \subset \uni{\mathscr{H}}$ and  $\cnu{\mathscr{H}}\subset \aws{E}$. If $T^nf$ converges to 0 then $f\in\cnu{\mathscr{H}}$, hence these two decompositions coincide. On the other hand, if $T$ has has discrete spectrum on $\uni{\mathscr{H}}$ then $\uni{\mathscr{H}}=\rev{E}$, so again the decomopositions coincide.
\end{proof}

An entire class of well-studied operators is 
guaranteed to be null:
\begin{defn}
	A doubly stochastic operator $T$ on $L^2(\mu)$ is \emph{quasi-compact}, if 
	there exists a 
	direct sum decomposition $L^2(\mu)=F\oplus H$ and $r<1$, such that:
	\begin{enumerate}
		\item $F$ and $H$ are closed invariant subspaces of $L^2(\mu)$,
		\item $\dim(F)<\infty$ and all eigenvalues of $T|_F$ have modulus 
		larger than $r$,
		\item the spectral radius of $T|_H$ is smaller than $r$. 
	\end{enumerate}
\end{defn}
\begin{thm}
	If $T$ is a quasi-compact doubly stochastic operator, then $T$ is null.
\end{thm}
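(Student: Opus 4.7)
The plan is to invoke the equivalence $(1)\Leftrightarrow(2)$ in Theorem~\ref{thm:main}, so I would aim to produce a $T$-invariant direct sum decomposition $L^2(\mu) = V \oplus W$ with $T$ having discrete spectrum on $V$ and $\pnorm{2}{T^n f}\to 0$ for every $f\in W$.

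First, the subspace $H$ from the quasi-compactness hypothesis already takes care of the decay: since every doubly stochastic operator is an $L^2$-contraction and the spectral radius of $T|_H$ is at most $r<1$, Gelfand's formula yields $\pnorm{2}{(T|_H)^n}^{1/n}\to s$ for some $s<1$, hence $\pnorm{2}{T^n f}\to 0$ for every $f\in H$. However, $F$ cannot simply play the role of $V$, because the hypothesis only places the eigenvalues of $T|_F$ in the annulus $(r,1]$, not necessarily on the unit circle.

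The main step is therefore a further $T$-invariant splitting $F=F_u\oplus F_s$, where $F_u$ is the span of eigenvectors of $T|_F$ with $|\lambda|=1$ and $F_s$ is the sum of the generalized eigenspaces of $T|_F$ for eigenvalues with $|\lambda|<1$. Here I would use that $T|_F$ is a contraction on a finite-dimensional space, which rules out non-trivial Jordan blocks at unimodular eigenvalues (otherwise $\pnorm{2}{(T|_F)^n}$ would be unbounded), so $F_u$ really is a sum of honest eigenspaces and the splitting is $T$-invariant. On $F_s$, every eigenvalue has modulus strictly less than $1$, so the iterates decay geometrically in norm by elementary finite-dimensional spectral theory.

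Setting $V=F_u$ and $W=F_s\oplus H$ then gives a $T$-invariant direct sum decomposition of $L^2(\mu)$: $T$ has discrete spectrum on $V$ by construction, and $\pnorm{2}{T^n f}\to 0$ for every $f\in W$ by combining the two decay statements above via $T$-invariance of $F_s$ and $H$ and linearity. Theorem~\ref{thm:main} then concludes that $T$ is null. The only point requiring a little care is the no-Jordan-block observation for unimodular eigenvalues of $T|_F$, for which the $L^2$-contractivity of doubly stochastic operators is exactly what is needed; the rest is a direct assembly of results already in place.
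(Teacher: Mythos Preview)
Your argument is correct, but the paper takes a shorter path. Instead of invoking Theorem~\ref{thm:main}, it appeals directly to Corollary~\ref{cor:compact_orbits}: nullity is equivalent to every orbit being precompact. Writing $f=g+h$ with $g\in F$ and $h\in H$, the orbit of $h$ converges to $0$ by the spectral radius formula (exactly as you argue), while the orbit of $g$ is automatically precompact simply because it is a bounded subset of a \emph{finite-dimensional} invariant subspace. No spectral analysis of $T|_F$ is needed at all---the finite-dimensionality does the work.

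Your route through condition~(2) of Theorem~\ref{thm:main} forces you to do more: you must split $F=F_u\oplus F_s$ and rule out nontrivial Jordan blocks at unimodular eigenvalues via the contractivity of $T$. That step is fine (norms on $F$ are equivalent, so a nontrivial Jordan block at $|\lambda|=1$ would make $\pnorm{2}{(T|_F)^n}$ unbounded), but it is extra machinery the paper avoids. On the other hand, your approach does yield slightly more explicit information, essentially identifying the JdLG decomposition with $F_u\oplus(F_s\oplus H)$, and it is in the spirit of Remark~\ref{rem:radius} that follows the theorem.
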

\begin{proof}
	By corollary \ref{cor:compact_orbits}, it suffices to show that the 
	closed orbit of $f$ is compact for every $f\in L^2(\mu)$. Let 
	$L^2(\mu)=F\oplus H$ be the decomposition from the definition of 
	quasi-compactness, and let $f=g+h$, where $g\in F$, $h\in H$. Since $F$ is 
	a finite-dimensional invariant subspace, the closure of the set 
	$\set{T^ng:n\in\N}$ is compact. On the other hand, by the spectral radius 
	formula, we know that for every $\eps>0$ we have $\pnorm{2}{T^n h}/\pnorm{2}{h} = 
	O(e^{n\eps} r^n )$. If we take $\eps<\log\frac{1}{r}$, this implies that 
	$\pnorm{2}{T^nh}$ converges to $0$, and thus the orbit $\bar{O}_T(f)$ is compact.
\end{proof}
\begin{rem} \label{rem:radius}
The above argument can be repeated to prove the following fact: if $T$ is a doubly stochastic operator and $V \oplus W$ is the decomposition into unitary and c.n.u. parts, such that $T|{}_V$ has discrete spectrum and $T|{}_W$ has spectral radius strictly smaller than 1, then $T$ is null. 
\end{rem}

To illustrate that having the spectral radius  equal to 1 does not guarantee non-nullity we will show an example of a null c.n.u. operator which has spectral radius equal to $1$ when considered on the orthogonal complement of constants. In addition, the operator has no non-trivial eigenfunctions.
\begin{example}
    For every $n$, let $X_n=[0,1]$, let $\mu_n$ be the Lebesgue measure and let $R_n$ be an operator defined on $L^2(X_n)$ by the formula $(R_n f)(x)=\frac{n-1}{n}f(x)+\frac{1}{n}\int fd\mu_n$. Let $X=X_1\times X_2\times\ldots$ and let $\mu$ be the product measure. Define the operator $R$ first on functions of the form $f(x)=f_1(x_1)f_2(x_2)\cdots f_n(x_n)$, $x=(x_k)_{k\in\N}$, $f_k\in L^2(\mu_k)$, as $Rf=R_{1}f_1R_{2}f_2\cdots R_{n}f_n$. Since the functions of this form are linearly dense in $L^2(\mu)$, $R$ extends to a doubly stochastic operator on all of $L^2(\mu)$. Let $S$ be a doubly stochastic operator on $L^2(\mu)$ induced by the left shift, i.e., $(Sf)(x_1,x_2,x_3,\ldots)=f(x_2,x_3,\ldots)$. Finally, let $T=SR$. For any $k$, if we take any $g\in L^2(\mu_k)$ and define $f\in L_0^2(\mu)$ as $f(x)=g(x_k)$, then 
				\begin{eqnarray}
				T^mf(x)&=&(R_{m+k-1}R_{m+k-2}\cdots R_kg)(x_{m+k-1})	\nonumber\\
				&=&\frac{k-1}{m+k-1}g(x_{m+k-1}) + \frac{m}{m+k-1} \int g d\mu_k.	\label{m_iterate}
				\end{eqnarray}
		Moreover,
		\[
		T^mf(x)= \prod_{k=1}^n T^mf_k (x_{m+k-1})
		\]
		for $f(x)=f_1(x_1)f_2(x_2)\cdots f_n(x_n)$, $x=(x_k)_{k\in\N}$, $f_k\in L^2(\mu_k)$.
		Below we verify the afforementioned properties of $T$.
    \begin{itemize}
        \item $T$ is null, because the orbit of every function is precompact. To see this, first note that if  $f(x)=f_1(x_1)f_2(x_2)\cdots f_n(x_n)$, $x=(x_k)_{k\in\N}$, then $T^mf$ converge to the constant function equal to $\int f_1\mu_1\cdots\int f_nd\mu_n=\int fd\mu$. Since such functions are linearly dense in $L^2(\mu)$, the same convergence to a constant holds for every function in $L^2(\mu)$.
        \item If $f$ is any function orthogonal to the constants, the images of $f$ under iterations of $T$ converge to $0$, which means that $T$ is c.n.u. on the space of such functions.
        \item If $f\in L_0^2(\mu)$ has the form $f(x)=g(x_k)$ for $g\in L^2(\mu_k)$, $x=(x_k)_{k\in\N}$, then by (\ref{m_iterate}) we have $\pnorm{2}{T^mf} =\frac{k-1}{m+k-1}\pnorm{2}{f}$. Since $k$ was arbitrary, the norm of $T^m$ is equal to $1$. By the spectral radius formula, we conclude that $P$ has spectral radius equal to one.
        \item     Finally, suppose that for some $\lambda\neq 0$ there is a function $f\in L_0^2(\mu)$ such that $Tf=\lambda f$. If $g$ is any function dependent only on the first $m$ coordinates, then since $T^m(f)$ only depends on coordinates from $m+1$ onwards, $T^mf$ and $g$ are orthogonal. As $T^mf=\lambda^mf$, this means that $f$ is orthogonal to every function dependent only on the first $m$ coordinates. As $m$ was arbitrary, it follows that $f$ is orthogonal to every function dependent only on finitely many coordinates, but since such functions are dense in $L_0^2(\mu)$, we conclude that $f=0$.
    \end{itemize}
\end{example}


\section{Representation theorem}
The celebrated theorem of Halmos and von Neumann states that every ergodic measure preserving dynamical system with discrete spectrum is isomorphic to a Kronecker system, i.e., a rotation of a compact abelian group. Since by Kushnirenko's theorem discrete spectrum is equivalent to being null, it is natural to search for an analogous representation theorem for null systems in case of doubly stochastic operators.
Consider the following example.

\begin{example}
Let $X$ be the annulus $S^1 \times I$, $I=[0,1]$, with the product of the Haar and Lebesgue measures. For $\alpha\in[0,1)$ define an operator by
\[
Tf(z,x) = \int f(ze^{2\pi i\alpha},x)\,d\lambda(x),
\]
namely, a product of a rotation by $\alpha$ and a trivial integral operator. Clearly, it is null with the reversible part $E_{rev}$ of the JdLG-decomposition consisting of functions constant on fibers $\{z\}\times I$. It is essentially a rotation of a circle with each point of a circle pumped to the unit interval and the transition probability between these intervals is given by the uniform distribution.

It is easy to modify the above example to see that we cannot expect a representation in a form of a product action. For instance, on the same space $X=S^1 \times I$ let us define  the measure:
\[
\nu(A)=\lambda\times \lambda \Big((A\cap \big(N\times I\big)\Big) + \lambda\times \delta_0 \Big(A\cap \big(N^c\times I\big)\Big),
\]
where $N=\{z:\arg(z)\in[0,\pi)\}$.
An appropriate modification of $T$, namely
\[
Tf(z,x) = \int f(ze^{2\pi i\alpha},x)\,d\lambda(x)\cdot\ind_{R_\alpha^{-1}N}(z) + f(ze^{2\pi i\alpha},0)\cdot\ind_{R_\alpha^{-1}N^c}(z),
\]
again gives a null operator with the same reversible part~$E_{rev}$.
\end{example}

Let $\rev{\Sigma}=\{A\in\Sigma:\ind_A\in \rev{E}\}$. Clearly, $\rev{\Sigma}$ is a sub-$\sigma$-algebra of $\Sigma$ and it is known that $\rev{E}=L^1(X,\rev{\Sigma},\mu)$. 
\begin{defn}Let $T$ be a Markov operator on $L^1(\mu)$.
\begin{itemize}
	\item $T$ is a \emph{Markov embedding} if it is a lattice homomorphism (i.e., $|Tf|=T|f|$ for every $f\in L^1(\mu)$) or, equvalently, there is a Markov operator $S$ such that $ST$ is an identity.
	\item $T$ is a \emph{Markov isomorphism} if it is a surjective Markov embedding.
\end{itemize}
\end{defn}

Markov embeddings preserve lattice operations of taking maximum or minimum and transform characteristic functions into characteristic functions. Therefore, they are in a natural one-to-one correspondence with homomorphisms of measure algebras, and in case of Lebesgue spaces, they are just Koopman operators of measure preserving transformations. Hence, for a Lebesgue space a Markov isomorphism can alwasy be replaced by a point isomorphism of dynamical systems.

\begin{thm}	\label{new_HvN}
An ergodic doubly stochastic operator $T$ on $(X,\Sigma,\mu)$ is null if and only the following two conditions are satisfied:
\begin{enumerate}
	\item the action of $T$ on $\rev{E}=L^1(X,\rev{\Sigma},\mu)$ is Markov isomorphic to a rotation $R$ of a compact abelian group $G$ with Haar measure~$\lambda$,
	\item 
	\[
	\lim_{n\to\infty} \pnorm{2}{T^nf - T^nE(f|\rev{\Sigma})}=0
	\]
	for every $f$.
\end{enumerate}

Furthermore, if $X$ is a Lebesgue space and $P_T$ is a transition probability inducing $T$, then the Markov isomorphism becomes a point isomorphism of dynamical systems, there is a measure-preserving map $\pi:X\to G$ satisfying $T(g\circ\pi)=g\circ R\circ \pi$ for every $g\in L^1(G,\lambda)$ and $P_T(x,\cdot)$ is supported on $\pi^{-1}R\pi(x)$.
\end{thm}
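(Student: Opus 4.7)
The plan is to work in $L^2(\mu)$ throughout, transferring the conclusion to the $L^p$ setting at the end via the density of $\rev{E}\cap L^2$ in $\rev{E}=L^1(X,\rev{\Sigma},\mu)$ and the insensitivity of (sequence) entropy to the choice of $L^p$. A preliminary observation is that the Hilbert adjoint $T^*$ is again a doubly stochastic operator (positivity, $T^*\ind=\ind$, and preservation of the integral are all inherited by duality), and every unimodular eigenfunction $Tf=\lambda f$ satisfies $T^*f=\bar{\lambda}f$: the equality $\pnorm{2}{Tf}=\pnorm{2}{f}$ forces $T^*Tf=f$ by Cauchy--Schwarz, whence $T^*f=\bar{\lambda}f$. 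Therefore the unimodular-eigenvector systems of $T$ and $T^*$ coincide and, by Theorem~\ref{thm:criteria}(1), $\rev{E}(T)=\rev{E}(T^*)$. For any $g\in\aws{E}$, standard JdLG theory in reflexive spaces supplies a subsequence with $T^{n_k}g\to 0$ weakly; then $\langle T^{n_k}g,f_\gamma\rangle=\gamma^{n_k}\langle g,f_\gamma\rangle$ for each unimodular eigenvector $f_\gamma$, which forces $\langle g,f_\gamma\rangle=0$. Hence $\aws{E}\cap L^2\subseteq(\rev{E}\cap L^2)^\perp$, and since both are algebraic complements of $\rev{E}\cap L^2=L^2(X,\rev{\Sigma},\mu)$ in $L^2$, they coincide. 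In particular, the JdLG projection on $L^2$ coincides with the conditional expectation $f\mapsto E(f|\rev{\Sigma})$.

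For the forward implication, assume $T$ is null. Theorem~\ref{thm:main}(3) provides $\pnorm{2}{T^nf}\to 0$ for $f\in\aws{E}$; applied to $f-E(f|\rev{\Sigma})\in\aws{E}$, this immediately yields (2). To obtain (1), observe that $T|_{\rev{E}}$ is a surjective $L^2$-isometry (its eigenvectors span and $T$ acts invertibly on each), whose inverse equals $T^*|_{\rev{E}}$, itself a doubly stochastic operator on $L^2(X,\rev{\Sigma},\mu)$. The key technical claim is that \emph{a doubly stochastic operator whose inverse is also doubly stochastic is a Koopman operator of an invertible measure-preserving transformation}. This follows from Jensen's inequality $T(f^2)\geq(Tf)^2$ (valid for any Markov operator and real $f\in L^\infty$): applying it to $T^{-1}$ at the point $Tf$ gives the reverse inequality $(Tf)^2\geq T(f^2)$, hence equality; multiplicativity on squares extends by polarization to full multiplicativity on $L^\infty$, and a positive multiplicative operator preserving $\ind$ is a Koopman operator. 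Ergodicity of $T$ passes to the resulting MPT $\tau$ on $(X,\rev{\Sigma},\mu)$, which has discrete spectrum by construction, so the classical Halmos--von Neumann theorem delivers the Markov isomorphism with a rotation $R$ on a compact abelian group $G$. This Koopman identification, which requires the positivity of \emph{both} $T$ and its inverse and not merely $L^2$-unitarity, is the step I expect to require the most care.

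For the converse, (1) supplies the discrete spectrum of $T|_{\rev{E}}$, while (2) combined with the orthogonality from the first paragraph shows $\pnorm{2}{T^nf}\to 0$ for every $f\in\aws{E}\cap L^2$, so Theorem~\ref{thm:main}(3) yields nullity. Finally, in the Lebesgue case, the Markov isomorphism of $\rev{E}=L^1(X,\rev{\Sigma},\mu)$ with $L^1(G,\lambda)$ arises from an isomorphism of measure algebras, which is realized by a measure-preserving point map $\pi\colon X\to G$ satisfying $\pi\circ\tau=R\circ\pi$ almost everywhere; consequently $T(g\circ\pi)=g\circ R\circ\pi$ for every $g\in L^1(G,\lambda)$. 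Testing the kernel identity $\int g(\pi(y))\,P_T(x,dy)=g(R\pi(x))$ against a countable dense subset of $C(G)$ identifies the pushforward $\pi_*P_T(x,\cdot)$ with the point mass $\delta_{R\pi(x)}$ for $\mu$-almost every $x$, yielding that $P_T(x,\cdot)$ is supported on $\pi^{-1}(R\pi(x))$.
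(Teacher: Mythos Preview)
Your argument is correct and tracks the paper's proof closely for condition~(2) and the converse direction, both of which reduce to Theorem~\ref{thm:main}(3) via the identification of the JdLG projector with $E(\cdot\mid\rev{\Sigma})$; the paper uses this identification tacitly (writing ``$E(f\mid\rev{\Sigma})=0$ for $f\in\aws{E}$''), whereas you spell it out through the orthogonality computation $\langle g,f_\gamma\rangle=0$. The substantive divergence is in condition~(1): the paper simply cites the operator-theoretic Halmos--von~Neumann theorem (Theorem~17.6 in \cite{EFHN}), which says directly that for any ergodic Markov semigroup the action on $\rev{E}$ is Markov isomorphic to a group rotation, independently of nullity. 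You instead re-derive this: you show $T|_{\rev{E}}$ is a bijective isometry with doubly stochastic inverse $T^*|_{\rev{E}}$, use Jensen twice to force $T(f^2)=(Tf)^2$ and hence multiplicativity, conclude $T|_{\rev{E}}$ is a Koopman operator of an invertible ergodic discrete-spectrum transformation, and then invoke the classical Halmos--von~Neumann theorem. This is a legitimate and self-contained route---essentially the content of the cited EFHN result---and it has the virtue of making transparent why positivity of both $T$ and its inverse (not mere $L^2$-unitarity) is the key ingredient. Your treatment of the kernel support via a countable dense family in $C(G)$ is also arguably cleaner than the paper's direct substitution of $g=\ind_{\{R\pi(x)\}}$, which is a $\lambda$-null function and so requires a word of justification you supply implicitly. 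One small remark: your invocation of~(1) in the converse to obtain discrete spectrum on $\rev{E}$ is unnecessary, since this is immediate from the definition of $\rev{E}$; only~(2) is doing work there, as the paper's proof makes explicit.
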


\begin{proof}
The fact that the action of $T$ on $\rev{E}$ is Markov isomorphic to a rotation of a compact abelian group is an extension of the Halmos-von Neumann theorem formulated as Theorem 17.6 in \cite{EFHN}.

Since $E(f|\rev{\Sigma})\in\rev{E}$, the function $f-E(f|\rev{\Sigma})$ belongs to~$\aws{E}$. If $h_A(T)=0$ for every sequence $A$, then \[
\lim_{n\to\infty}\pnorm{2}{T^n(f - E(f|\rev{\Sigma}))}= 0,
\]
by thm.~\ref{thm:main}. Conversely, this condition guarantees that $h_A(T)=0$ for every $A$, because $E\big(f|\rev{\Sigma}\big)=0$ for $f\in\aws{E}$.

The existence of a factor map $\pi$ in case of a Lebesgue space follows from the fact that a natural injection $\rev{\Sigma}\to\Sigma$ is a measure algebra homomorphism, hence it comes from a measure-preserving map.

Finally, if $X$ is a Lebesgue space then for any~$g\in L^1(\G,\lambda)$, 
\[
\int g\circ\pi(y) P_T(x,dy)=T(g\circ\pi)(x)=g(R\pi(x)).
\]
Taking $g=\ind_{\{R\pi(x)\}}$ we obtain that $P_T(x,\cdot)$ is supported on $\pi^{-1}R\pi(x)$.
\end{proof}


\end{document}